\documentclass{amsart}
\usepackage[latin1]{inputenc}
\usepackage[english]{babel}
\usepackage{amsmath}
\usepackage{amsfonts}
\usepackage{amssymb}
\usepackage{epsfig}
\usepackage{amsopn}
\usepackage{amsthm}
\usepackage{color}
\usepackage{graphicx}
\usepackage{subfigure}
\usepackage{enumerate}
\newtheorem{theorem}{Theorem}[section]

\newtheorem{lemma}[theorem]{Lemma}
\newtheorem{proposition}[theorem]{Proposition}

\newtheorem*{theorem*}{Theorem}
\newtheorem*{lemma*}{Lemma}
\newtheorem*{remark*}{Remark}
\newtheorem*{definition*}{Definition}
\newtheorem*{proposition*}{Proposition}
\newtheorem*{corollary*}{Corollary}
\numberwithin{equation}{section}
\newcommand{\real}{\mathbb{R}}

\let\ced=\c         % cedilla
\def\qed{\,\unskip\kern 6pt \penalty 500
\raise -2pt\hbox{\vrule \vbox to8pt{\hrule width 6pt
\vfill\hrule}\vrule}\par}
\definecolor{darkblue}{rgb}{0.05, .05, .65}
\definecolor{darkgreen}{rgb}{0.1, .65, .1}
\definecolor{darkred}{rgb}{0.8,0,0}
\newcommand{\beqn}{\begin{equation}}
\newcommand{\eeqn}{\end{equation}}
\newcommand{\bear}{\begin{eqnarray}}
\newcommand{\eear}{\end{eqnarray}}
\newcommand{\bean}{\begin{eqnarray*}}
\newcommand{\eean}{\end{eqnarray*}}
\begin{document}
%%%%%%%%%%%%%%%%%%%%%%%%%%%%%%%%%%%%%%%%%%%%%%%%%

%%%%%%%%%%%%%%%%%%%%%%%%%%%%%%%%%%%%%%%%%%%%%%%%%
\title[Global self-similar solutions for Hardy-H\'enon equations]{Global self-similar solutions for Hardy-H\'enon equations with linear and quasilinear diffusion}

\author[R. G. Iagar]{Razvan Gabriel Iagar}
\address[R. G. Iagar]{Departamento de Matem\'atica Aplicada,
	Ciencia e Ingenieria de los Materiales y Tecnologia Electr\'onica,
	Universidad Rey Juan Carlos, M\'ostoles, 28933, Madrid, Spain}
\email{razvan.iagar@urjc.es}

\author[A. Sánchez]{Ariel S\'anchez}
\address[A. Sánchez]{Departamento de Matem\'atica Aplicada,
	Ciencia e Ingenieria de los Materiales y Tecnologia Electr\'onica,
	Universidad Rey Juan Carlos, M\'ostoles, 28933, Madrid, Spain}
\email{ariel.sanchez@urjc.es}

\author[E. Sarrión-Pedralva]{Erik Sarri\'on-Pedralva}
\address[E. Sarrión-Pedralva]{Departamento de Matem\'atica Aplicada,
	Ciencia e Ingenieria de los Materiales y Tecnologia Electr\'onica,
	Universidad Rey Juan Carlos, M\'ostoles, 28933, Madrid, Spain}
\email{erik.sarrion@urjc.es}
\date{}

\maketitle

\begin{abstract}
Global self-similar solutions to the parabolic Hardy-H\'enon equation
$$
u_t=\Delta u^m+|x|^{\sigma}u^p, \quad (x,t)\in\real^N\times(0,\infty),
$$
are classified in the range of exponents $m\geq1$, $p>m$ and $\sigma>\max\{-2,-N\}$. The classification varies strongly with respect to the celebrated \emph{Fujita} and \emph{Sobolev critical exponents}
$$
p_F(\sigma)=m+\frac{\sigma+2}{N}, \quad p_S(\sigma)=
	\begin{cases}
        \frac{m(N+2\sigma+2)}{N-2}, & \mbox{if } N\geq3, \\[1mm]
    	   \infty, & \mbox{if } N\in\{1,2\}.
    \end{cases}
$$
Indeed, if $p\in(p_F(\sigma),p_S(\sigma))$, both equations admit self-similar solutions with either compact support (if $m>1$) or Gaussian-like tail as $|x|\to\infty$ (if $m=1$), as well as a one-parameter family satisfying
$$
u(x,t)\sim C|x|^{-(\sigma+2)/(p-m)}, \quad {\rm as} \ |x|\to\infty.
$$
If $p\geq p_S(\sigma)$, there are only self-similar solutions with the latter algebraic tail, while for $m<p\leq p_F(\sigma)$ no global solutions exist. The results open the way for a deeper study of the role of these solutions in the dynamics of the Hardy-H\'enon equations.
\end{abstract}

\medskip

\noindent {\bf Mathematics Subject Classification 2020:} 35A24, 35B33, 35B36, 35C06, 35K57, 35K59.

\medskip

\noindent {\bf Keywords and phrases:} global solutions, spatially inhomogeneous source, Sobolev critical exponent, Fujita critical exponent, self-similar solutions, Hardy-H\'enon equations.

\section{Introduction}

In the last decade, an increasing interest on the study of qualitative properties and large time behavior of solutions to a class of reaction-diffusion equations involving a spatially nonhomogeneous term has been observed. This class of equations is usually referred in literature as \emph{parabolic Hardy-H\'enon equations}, the name stemming from Henon's work \cite{He73} where the elliptic counterpart of the equations has been deduced as a model in astrophysics, and Baras and Goldstein's paper \cite{BG84} where existence and non-existence of solutions to the critically singular equation of the family is strongly related to the optimal constant of the Hardy's inequality. To be more specific, the standard parabolic Hardy-H\'enon equation reads
\begin{equation}\label{eq0}
u_t=\Delta u+|x|^{\sigma}u^p, \quad (x,t)\in\real^N\times(0,T),
\end{equation}
with $p>1$ and $\sigma>\max\{-2,-N\}$, where $N$ designs the dimension of the space we are working in. The main feature of it is the competition between the diffusion and the reaction terms, the latter being weighted by an unbounded variable coefficient. Let us note here that, if $\sigma>0$, we deal with a weight which is unbounded at infinity, while if $\sigma<0$, this weight becomes singular at $x=0$. This latter case is usually known as the Hardy equation and, from the functional analytic point of view, its study is more involved.

Precisely the complexity of the functional-analytic properties of Eq. \eqref{eq0}, related on the one hand to the formulation of a sharp theory of well-posedness depending on the regularity of the initial condition and, on the other hand, to the establishment of conditions on the solutions for either finite time blow-up or global existence (the latter meaning $u(t)$ bounded for any $t\in(0,\infty)$), has attracted the attention of a number of authors in recent years, see for example \cite{BS19, BSTW17, CIT21, CIT22, CITT24, MS21, SU24}, to name but a few of the works dealing with the analysis of Eq. \eqref{eq0}. In these works, the influence of the weight $|x|^{\sigma}$ has been revealed. Indeed, the optimal functional spaces ensuring existence and uniqueness of solutions strongly depend on $\sigma$, as well as the asymptotic behavior of a suitable class of solutions (see for example \cite{BSTW17}). Moreover, the coefficient $|x|^{\sigma}$ has a strong influence also on the properties related to finite time blow-up, as shown in \cite{MS21, SU24}.

A nontrivial generalization of the parabolic Hardy-H\'enon equation \eqref{eq0} is the following equation involving a quasilinear diffusion
\begin{equation}\label{eq1}
u_t=\Delta u^m+|x|^{\sigma}u^p, \quad (x,t)\in\real^N\times(0,\infty),
\end{equation}
with $m>1$ and the same conditions as for Eq. \eqref{eq0} on the exponents $p$ and $\sigma$. An important starting point in the qualitative theory of Eq. \eqref{eq1} is the paper \cite{AdB91}. Despite the fact that it deals with the weight $(1+|x|)^{\sigma}$ instead of the pure power $|x|^{\sigma}$, the local existence and uniqueness results therein remain valid also for Eq. \eqref{eq1} at least for $\sigma>0$. Related to the alternative between blow-up and global existence, the \emph{Fujita-type exponent} to Eq. \eqref{eq1}, that is
\begin{equation}\label{pF}
p_F(\sigma)=m+\frac{2+\sigma}{N},
\end{equation}
has been derived in \cite{Qi98}, where a global solution for $p>p_F(\sigma)$ is constructed as well. Let us recall here that, in analogy to the seminal paper by Fujita \cite{Fu66}, the exponent $p_F(\sigma)$ has the property that, for $m<p\leq p_F(\sigma)$, any nontrivial solution presents finite time blow-up, while for $p>p_F(\sigma)$ both global solutions and solutions with finite time blow-up exist, and finding conditions to classify the solutions with respect to this alternative is a very interesting problem, analyzed for example in \cite{Su02}. An analysis of the behavior of solutions near the blow-up time is performed in \cite{AT05}.

Let us finally mention that a sharp theory of existence and uniqueness of solutions to Eq. \eqref{eq1} in the range $\sigma>0$, together with a number of other properties related to the evolution of interfaces, has been achieved by two of the authors in \cite{ILS24}. Moreover, an optimal theory of existence, uniqueness and regularity of solutions to Eq. \eqref{eq1} in the singular case $\sigma\in(\max\{-2,-N\},0)$, together with strong conditions on the initial data leading to either finite time blow-up or global existence, is performed in the recent work \cite{IL26}.

\medskip

\noindent \textbf{Self-similar solutions.} It is a well-established fact that nonlinear diffusion equations such as \eqref{eq0} and \eqref{eq1} are described by a class of solutions enjoying a specific form, known as \emph{self-similar solutions}. Such solutions are, in many cases, prototypes for the typical expected behavior of general solutions of an equation (in the sense of global behavior or finite time blow-up) and, at the same time, patterns for the large time behavior of rather general classes of solutions. The interested reader can find more about the role of self-similarity in the making of the theory of nonlinear diffusion equations in the monographs by V\'azquez \cite{VPME, VazSmooth}.

The present work is devoted to the classification of global in time radially symmetric self-similar solutions to the parabolic Hardy-H\'enon equations \eqref{eq0} and \eqref{eq1} in the range of exponents (written in an unified form which includes both $m=1$ and $m>1$)
\begin{equation}\label{range.exp}
m\geq1, \quad p>p_F(\sigma), \quad \sigma\in(\max\{-2,-N\},\infty),
\end{equation}
mentioning at this point that, in the complementary range $1<p\leq p_F(\sigma)$, all the non-trivial solutions to Eq. \eqref{eq1} blow up in finite time according to \cite{Pi97, Qi98, ILS24, IL26}. More precisely, we look for solutions of the form
\begin{equation}\label{forward.SS}
u(x,t)=t^{-\alpha}f(\xi), \quad \xi=|x|t^{-\beta},
\end{equation}
for suitable exponents $\alpha$ and $\beta$. Inserting the previous form in Eq. \eqref{eq1} and equating the time variable in the resulting terms, we deduce by straightforward calculations that
\begin{equation}\label{SSexp}
\alpha=\frac{\sigma+2}{\sigma(m-1)+2(p-1)}, \quad \beta=\frac{p-m}{\sigma(m-1)+2(p-1)},
\end{equation}
and observe that the exponents in \eqref{SSexp} are both positive in the range \eqref{range.exp}. It then follows that the profile $f$ solves the following non-autonomous differential equation
\begin{equation}\label{ODE.forward}
(f^m)''(\xi)+\frac{N-1}{\xi}(f^m)'(\xi)+\alpha f(\xi)+\beta\xi f'(\xi)+\xi^{\sigma}f^p(\xi)=0.
\end{equation}

Eq. \eqref{ODE.forward} has been thoroughly studied for $\sigma=0$ in the semilinear case $m=1$, where it is the radially symmetric reduction of the differential equation sometimes referred as \emph{the Haraux-Weissler equation}, proposed and analyzed in \cite{HW82, W86} as an interesting case of non-uniqueness in suitable functional spaces. Both rapidly decaying (that is, with a Gaussian-like tail as $\xi\to\infty$) and slowly decaying (that is, with a power-like tail as $\xi\to\infty$) self-similar profiles $f$ are identified, and the uniqueness of the rapidly decaying, Gaussian-like profile is established in two steps in \cite{Y96, DH98}. Later on, several papers by Naito focused on the slowly decaying profiles.  First, a classification of them according to their precise behavior at infinity is given in \cite{N06, N08}. Later, their role of threshold between global solutions and solutions presenting finite time blow-up when $p>p_F(0)$, as well as asymptotic pattern for a more general class of global solutions under the only condition of a specific behavior as $|x|\to\infty$, is analyzed in \cite{N12, N20} (see also references therein). Some global self-similar solutions to Eq. \eqref{eq0} are also considered in \cite{BSTW17} as large time behavior patterns for more general solutions.

Let us only mention here that Eq. \eqref{eq0} also admits self-similar solutions presenting a finite time blow-up (which are outside the scope of this work). The ranges of existence of such solutions are now well-known. A first classification given in \cite{FT00} for $p$ below a critical exponent known as the Joseph-Lundgren exponent has been completed recently by two of the authors who, in \cite{IMS26}, analyzed the complementary range of $p$ larger than the Joseph-Lundgren exponent and revealed a very strong and unexpected influence of the exponent $\sigma$.

For $m>1$ and $\sigma=0$, a study of Eq. \eqref{ODE.forward} leading to a classification of global self-similar solutions is performed in \cite[Theorem 4, Chapter IV.3.4]{S4}, where both compactly supported profiles and profiles with an algebraic decay as $\xi\to\infty$ are identified, depending on the range of the exponents $p$. To the best of our knowledge, the uniqueness of the compactly supported profile is not yet proved. More recently, two of the authors started a systematic study of equations in the form of Eq. \eqref{eq1} and their self-similar solutions (either with finite time blow-up or global in time) are analyzed and classified in a number of papers, among which we quote \cite{ILS23, IMS23, IS22, IS25b} (see also references therein). This analysis has been extended also to the fast diffusion range $m<1$, where global self-similar solutions have been identified and analyzed in \cite{IMS23b, IS26}. Despite the fact that the range $m<1$ is out of the scope of this work, the techniques that we employ throughout this paper follow those given in these works.

We next introduce and explain our results.

\medskip

\noindent \textbf{Main results.} As outlined above, our objective is to determine the optimal ranges of existence and non-existence of global  self-similar solutions in the form \eqref{forward.SS} to Eqs. \eqref{eq1} and \eqref{eq0} in the range of exponents \eqref{range.exp}. The forthcoming classification is based on their behavior as $|x|\to\infty$. This behavior depends on two important critical exponents: the first of them is the Fujita-type exponent $p_F(\sigma)$ defined in \eqref{pF}, and the second one is the Sobolev critical exponent $p_S(\sigma)$ defined below. We also introduce, in the same definition, a third critical exponent $p_c(\sigma)$ which will play a rather technical role in some of the proofs.
\begin{equation}\label{pSpc}
p_c(\sigma):=\left\{\begin{array}{ll}\frac{m(N+\sigma)}{N-2}, & N\geq3,\\ \infty, & N\in\{1,2\},\end{array}\right.
\qquad p_S(\sigma):=\left\{\begin{array}{ll}\frac{m(N+2\sigma+2)}{N-2}, & N\geq3,\\ \infty, & N\in\{1,2\}.\end{array}\right.
\end{equation}
Let us first observe that these critical exponents are ordered: indeed, in the range of exponents \eqref{range.exp}, we have
$$
p_S(\sigma)-p_c(\sigma)=\frac{m(\sigma+2)}{N-2}>0, \quad p_c(\sigma)-p_F(\sigma)=\frac{(\sigma+2)(mN-N+2)}{N(N-2)}>0,
$$
provided $N\geq3$.

We thus focus on the profiles $f$ solutions to \eqref{ODE.forward} satisfying the initial conditions
\begin{equation}\label{icpos}
f(0)=A\in(0,\infty), \quad f'(0)=0, \quad {\rm if} \ \sigma>0,
\end{equation}
or $f(0)=A\in(0,\infty)$ and
\begin{equation}\label{icneg}
f(\xi)=\left[A^{m-p}+\frac{p-m}{m(N+\sigma)(\sigma+2)}\xi^{\sigma+2}\right]^{-1/(p-m)}+o\left(\xi^{\sigma+2}\right),
\quad {\rm if} \ \sigma\in(-2,0),
\end{equation}
as $\xi\to\infty$. As we shall see from the analysis, the conditions \eqref{icpos} and \eqref{icneg} ensure the uniqueness of a profile $f$ solving \eqref{ODE.forward} and, for simplicity, we will denote throughout the paper by $f(\cdot;A)$ this unique profile satisfying the previous initial conditions.

With this notation, we can now state our main results, starting from the \textbf{semilinear case} $m=1$.
\begin{theorem}\label{th.global.heat}
Let $m=1$ and $p$, $\sigma$ be as in \eqref{range.exp}. Then
\begin{enumerate}
  \item If $p_F(\sigma)<p<p_S(\sigma)$, there exist $A_*<A^*\in(0,\infty)$ such that the profile $f(\cdot;A^*)$ has an exponential decay as $\xi\to\infty$, more precisely,
\begin{equation}\label{beh.Q5heat}
\lim\limits_{\xi\to\infty}e^{\xi^2/4}f(\xi;A^*)=K\in(0,\infty),
\end{equation}
and, for any $A\in(0,A_*)$, the profile $f(\cdot;A)$ has an algebraic decay as $\xi\to\infty$; that is,
\begin{equation}\label{beh.Q1heat}
\lim\limits_{\xi\to\infty}\xi^{(\sigma+2)/(p-1)}f(\xi;A)=L(A)\in(0,\infty).
\end{equation}
  \item If $N\geq3$ and $p\geq p_S(\sigma)$, then all the profiles $f(\cdot;A)$ behave as in \eqref{beh.Q1heat}, for any $A\in(0,\infty)$.
\end{enumerate}
\end{theorem}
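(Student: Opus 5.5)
We will argue by a shooting method in the parameter $A=f(0)$, combined with a phase-space analysis of \eqref{ODE.forward} for $m=1$. The first step is to pass to an autonomous system. We use Emden--Fowler/Haraux--Weissler type variables, for instance
$$
X=\frac{\xi f'}{f},\qquad Y=\xi^{\sigma+2}f^{p-1},\qquad Z=\xi^2,\qquad \eta=\ln\xi .
$$
In these variables \eqref{ODE.forward} becomes a three-dimensional polynomial system. Its critical points at finite distance and at infinity (after a Poincar\'e compactification in $Z$) encode the possible behaviors of $f$:
\begin{itemize}
\item the origin-type point with $Y\to0$, $X\to0$ as $\eta\to-\infty$ carries exactly the profiles $f(\cdot;A)$ satisfying \eqref{icpos} or \eqref{icneg};
\item a point at infinity with $X\to-(\sigma+2)/(p-1)$ carries the algebraic tail \eqref{beh.Q1heat};
\item a point at infinity with $X\sim -Z/2$ carries the Gaussian tail \eqref{beh.Q5heat};
\item profiles that vanish at a finite $\xi_0$ (change sign) give orbits crossing $\{Y=0\}$ with $X\to-\infty$.
\end{itemize}
We will compute the linearizations at these points. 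The key facts we need are two. First, the algebraic-tail point at infinity is an attractor, or at least attracts an open set of orbits. Second, the Gaussian point has a one-dimensional stable manifold in the relevant region, so orbits reaching it form a codimension-one set. The uniqueness of $f(\cdot;A)$ is also read off here, since orbits leave the origin-type point along a one-parameter family indexed by $A$.

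Next comes the behavior for small $A$, which gives $A_*$. Set $f(\xi;A)=A\,g(A^{(p-1)/(\sigma+2)}\xi)$, or equivalently work in the original variable. As $A\to0$ the reaction term becomes negligible on the region where $f$ is not small, and $f$ is close to $A$ times the heat profile. Because $p>p_F(\sigma)$, i.e. $\alpha>N/2$, the linear equation $f''+\frac{N-1}{\xi}f'+\beta\xi f'+\alpha f=0$ has its Gaussian solution changing sign, since $\alpha\neq N\beta/\ldots$. Its decaying positive solutions are exactly those with tail $\xi^{-\alpha/\beta}=\xi^{-(\sigma+2)/(p-1)}$.

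We will make this rigorous in phase space. As $A\to0$, the orbit from the origin-type point converges to an orbit inside the invariant plane $\{Y=0\}$. That limit orbit reaches the algebraic-tail attractor after staying away from the positivity boundary, and by continuous dependence, plus openness of the basin, the same holds for all small $A$. This yields \eqref{beh.Q1heat} for $A\in(0,A_*)$.

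For large $A$ we use the opposite rescaling. As $A\to\infty$ the diffusion and reaction dominate the $\alpha f+\beta\xi f'$ terms, and the profile approaches the stationary Hardy--H\'enon (Lane--Emden) equation $\Delta w+|x|^\sigma w^p=0$. When $p<p_S(\sigma)$ its radial solutions vanish at a finite radius, by the Pohozaev identity. Hence $f(\cdot;A)$ changes sign for $A$ large, and the set of such $A$ is open.

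Let $\mathcal{A}$ be the set where \eqref{beh.Q1heat} holds and $\mathcal{C}$ the set where $f$ hits zero. Both are open and nonempty, so $(0,\infty)$ is not their union. Any $A^*\notin\mathcal{A}\cup\mathcal{C}$ gives a positive profile whose orbit must go to the remaining attracting possibility, namely the Gaussian point. We will exclude other $\omega$-limits, such as periodic orbits, by a monotone Lyapunov-type quantity along $Y$ or $Z$. This gives \eqref{beh.Q5heat}, and $A_*\le A^*$ follows by taking $A_*=\sup\{A:(0,A)\subset\mathcal{A}\}$.

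For part (2), with $p\ge p_S(\sigma)$, we will use a Pohozaev-type identity for \eqref{ODE.forward}. Multiply by $\xi^{N}f'$ and by $\xi^{N-1}f$, and combine with weights chosen according to the $p_S$ balance. This shows that no profile can vanish at a finite $\xi_0$ or have the Gaussian tail, because the boundary terms have a sign precisely when $p\ge p_S(\sigma)$. By the phase-space classification the only remaining option is \eqref{beh.Q1heat}.

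The main obstacle is this global step: the Pohozaev computation with the extra $\alpha f+\beta\xi f'$ terms, and the exclusion of exotic $\omega$-limits in the three-dimensional system. The local analysis at infinity, with non-hyperbolic points needing center-manifold arguments, is technical but standard.
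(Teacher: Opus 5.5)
Your part (1) is the paper's proof in permuted coordinates: for $m=1$, your $(X,Y,Z)$ are the paper's $(Y,Z,X/\alpha)$. You shoot along the unstable manifold of $P_0$. The limit orbit in your plane $\{Y=0\}$ (the linear equation) ends at the algebraic-tail attractor $Q_1$ when $p>p_F(\sigma)$. The one in the other invariant plane (Lane--Emden--H\'enon) ends at $Q_3$ when $p<p_S(\sigma)$, and the two open basins leave a separating parameter.

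The genuine gap is the step you postpone: showing that a positive profile with neither a zero nor the tail \eqref{beh.Q1heat} must decay fast. Periodic orbits are not the issue, since $\xi^2$ increases strictly and every positive orbit escapes to infinity. The issue is the $\omega$-limit on the sphere at infinity, which also carries $Q_4$ and the whole arc of points $Q_\gamma$. The paper first makes $\xi^{\sigma+2}f^{p-1}$ (your $Y$) eventually monotone. If it diverges, then eventually $-(\sigma+2)/(p-1)<\xi f'/f<0$. This is incompatible with $Q_4$ (Lemma \ref{lem.Q4}) and with the center manifolds of the $Q_\gamma$, along which $\xi f'/f\approx-(1+\kappa)(\sigma+2)/(p-1)$. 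If it stays bounded, evaluating the equation at the extrema of $f'/(\xi f)$ forces this ratio to tend to $0$ (the excluded $Q_1$ case) or to $-1/2$, i.e.\ $Q_5$. An unspecified ``monotone Lyapunov quantity'' does not replace this.

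Be aware that the paper's monotonicity device only covers $p\le p_c(\sigma)$. It reads off from \eqref{interm11} that $g=\xi^{(\sigma+2)/(p-1)}f$ has no local minima. However, the zeroth-order coefficient in \eqref{interm11} should be $k(k+2-N)$ with $k=(\sigma+2)/(p-1)$. For $p>p_c(\sigma)$, the constant $g$ coming from the singular stationary solution $K|x|^{-k}$ must solve that equation, which is impossible with the positive coefficient printed there. So for $N\geq3$ and $p>p_c(\sigma)$, local minima of $g$ are no longer excluded, and that step needs another argument.

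Two smaller corrections.
First, with $\beta=1/2$, $p>p_F(\sigma)$ means $\alpha<N/2$, not $\alpha>N/2$. Then the regular solution $M(\alpha,\frac N2,-\frac{\xi^2}{4})$ (Kummer's function) of the linear equation is positive with tail $c\,\xi^{-2\alpha}$. This, or the barrier line $Y=-X/N$ of Proposition \ref{prop.Z0}, is the rigorous small-$A$ input.
Second, reaching the Gaussian point only gives $f'/(\xi f)\to-1/2$, i.e.\ $\ln f=-\xi^2/4+o(\xi^2)$. Linearizing the tail yields $f\sim K\xi^{2\alpha-N}e^{-\xi^2/4}$ with $2\alpha<N$, so \eqref{beh.Q5heat} needs the extra factor $\xi^{N-2\alpha}$. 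Lemma \ref{lem.Q5m1} takes the same shortcut.

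Part (2) is a genuinely different route, and it can be made to work, but not as written. Take all integrals over $(0,R)$, with $R$ the first zero, or $R=\infty$ and no boundary term for a fast-decaying profile. Your multipliers $\xi^Nf'$ and $\xi^{N-1}f$ then give
\[
\tfrac12R^Nf'(R)^2+\tfrac12\int\xi^{N+1}f'^2-\Bigl(\alpha+\tfrac{N(N-2)}{8}\Bigr)\int\xi^{N-1}f^2+\Bigl(\tfrac{N-2}{2}-\tfrac{N+\sigma}{p+1}\Bigr)\int\xi^{N-1+\sigma}f^{p+1}=0.
\]
The $f^2$ term has the wrong sign. Hardy's inequality $\int_0^R\xi^{N+1}f'^2\ge\frac{N^2}{4}\int_0^R\xi^{N-1}f^2$ turns its coefficient into $\frac N4-\alpha\geq\frac12$. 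Alternatively, use the multipliers $\xi f'$, $f$, $\xi^2f$ against $\xi^{N-1}e^{\xi^2/4}$. Either way, the threshold $p_S(\sigma)$ enters through the coefficient of the interior integral $\int\xi^{N-1+\sigma}f^{p+1}$, not through a boundary term.

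This excludes zeros and fast decay uniformly in $p\ge p_S(\sigma)$. But concluding \eqref{beh.Q1heat} still needs the classification missing from part (1), and precisely in the range $p>p_c(\sigma)$ where the paper's device does not apply. The paper sidesteps classification by trapping every $l_C$ in the cylinder over \eqref{cylinder}, so that its $Y,Z$ stay bounded while $X\to\infty$. That barrier is clean at $p=p_S(\sigma)$, where the flux is nonpositive. For $p>p_S(\sigma)$, however, the flux \eqref{flow.cyl.ext} is positive on the part of the cylinder with $-\frac{N-2}{2m}<Y<-\frac{\sigma+2}{p-m}$ once $X$ is large. There your Pohozaev identity is a genuinely useful substitute.
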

We thus observe that at least one special profile, with a Gaussian-like behavior as $\xi\to\infty$ similar to the one of the pure heat equation, exists only in a limited range of $p$, corresponding to the second item in Theorem \ref{th.global.heat}. Another interesting fact to be noticed at this point is that \emph{all} the profiles $f(\cdot;A)$ generate positive self-similar solutions to Eq. \eqref{eq0} for $p\geq p_S(\sigma)$.

In the \textbf{quasilinear range} $m>1$, the outcome of our analysis generalizes the statement of Theorem \ref{th.global.heat}, with a significant change stemming from the properties of the diffusion term.
\begin{theorem}\label{th.global.super}
Assume $m>1$ and $p$, $\sigma$ as in \eqref{range.exp}. Then
\begin{enumerate}
\item If $p_F(\sigma)<p<p_S(\sigma)$, there exists $A^*\in(0,\infty)$ such that the profile $f(\cdot;A^*)$ is compactly supported, that is, there exists $\xi_0\in(0,\infty)$ such that
\begin{equation}\label{beh.Q5f}
f(\xi_0;A)=0, \quad f(\xi;A)>0 \ {\rm for} \ \xi\in(0,\xi_0), \quad (f^m)'(\xi_0;A)=0.
\end{equation}
Moreover, there is $A_*\in(0,A^*)$ such that, for any $A\in(0,A_*)$, the profile $f(\cdot;A)$ has an algebraic decay as $\xi\to\infty$ as follows
\begin{equation}\label{beh.Q1f}
\lim\limits_{\xi\to\infty}\xi^{(\sigma+2)/(p-m)}f(\xi;A)=L(A)\in(0,\infty).
\end{equation}
\item If $N\geq 3$ and $p\geq p_S(\sigma)$, then all the profiles $f(\cdot;A)$ behave as in \eqref{beh.Q1f}, for any $A\in(0,\infty)$.
\end{enumerate}
\end{theorem}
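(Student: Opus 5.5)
The argument is a shooting argument in the parameter $A$, carried out in a phase space. I will transform \eqref{ODE.forward} with $m>1$ into an autonomous quadratic system. A natural choice is
$$
X=\frac{m}{\alpha}\xi^{-2}f^{m-1},\qquad Y=\frac{m}{\alpha}\xi^{-1}f^{m-2}f',\qquad Z=\frac{1}{\alpha}\xi^{\sigma}f^{p-1},
$$
with the new independent variable $\eta$ defined by $d\eta/d\xi=\alpha\xi/(mf^{m-1})$. This gives a polynomial system in $(X,Y,Z)$ that contains the parameters $m,N,p,\sigma$ only through $\alpha,\beta$ and the exponents.

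The plan is to identify the relevant critical points, including those at infinity via the Poincar\'e compactification, and then to match them with the three behaviours in the statement.
\begin{itemize}
\item The profiles $f(\cdot;A)$ correspond to the orbits in the unstable manifold of a critical point describing $\xi\to0$ with $f(0)=A>0$. This point is roughly $(X,Y,Z)=(\infty,0,0)$, blown up suitably, and distinguished according to whether $\sigma>0$ or $\sigma<0$. Local analysis shows that this unstable manifold is a one-parameter family of orbits, parametrized by $A$, which gives the uniqueness behind \eqref{icpos}--\eqref{icneg}.
\item The algebraic decay \eqref{beh.Q1f} corresponds to $f\sim L\xi^{-(\sigma+2)/(p-m)}$, hence to $X,Z\to0$ with $Y\to-(\sigma+2)/(p-m)$ in the rescaled sense. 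This should be a stable node, denoted $P$, which attracts an open set of orbits.
\item Compact support \eqref{beh.Q5f} corresponds to orbits reaching a critical point at infinity with $X\to0$ and $Y\to-\beta/m$ (interface condition $(f^m)'=-\beta\xi_0 f\cdot$const). This point, denoted $Q$, is a saddle-type point with a one-dimensional stable direction inside the relevant region.
\end{itemize}

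The key steps, in order, are the following.
\begin{enumerate}
\item \emph{Small $A$.} For $A$ small, a rescaling $f=A\,g(A^{(p-m)/(\sigma+2)}\xi)$ shows that near $\xi=0$ the profile follows the stationary (elliptic Lane--Emden/H\'enon) solution for a long range. It then exits towards $P$, since the stationary problem's orbit arrives close to the critical point corresponding to the singular stationary solution $C\xi^{-(\sigma+2)/(p-m)}$. The stable manifold of that point relative to the full system leads into $P$ when $p>p_F(\sigma)$. By continuity and openness of the basin of $P$, we obtain $A_*>0$ such that $(0,A_*)$ lies in that basin.
\item \emph{Large $A$, for $p<p_S(\sigma)$.} Rescaling shows that the flow approaches the stationary equation $\Delta w^m+|x|^\sigma w^p=0$. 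For $p<p_S(\sigma)$, all radial solutions of this equation vanish at a finite radius. Hence for $A$ large, $f(\cdot;A)$ crosses zero; this is an open condition, and in the phase space it corresponds to orbits crossing $\{X=0,\ Y<0\}$ transversally towards negative values.
\item \emph{Shooting.} Let $A^*=\sup\{A: f(\cdot;A)\text{ stays positive and decays like }P\}$, or the infimum of the crossing set. At $A^*$ neither open property holds. A trichotomy argument, using the facts that orbits in the region must go to $P$, cross, or enter $Q$, together with the fact that there are no other attractors or cycles, gives that $f(\cdot;A^*)$ ends at $Q$, i.e.\ it is compactly supported.
\item \emph{The case $p\geq p_S(\sigma)$.} I will show that no orbit can cross or reach $Q$, using a Pohozaev-type monotone quantity. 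A candidate is
$$
E(\xi)=\xi^{N}\Big[\tfrac12((f^m)')^2+\tfrac{m}{p+m}\xi^{\sigma}f^{p+m}\Big]+\tfrac{N-2}{2}\xi^{N-1}f^m(f^m)'.
$$
Its derivative, after using \eqref{ODE.forward}, has a sign when $p\geq p_S(\sigma)$, namely $(N+\sigma)-\tfrac{(N-2)(p+m)}{2m}\le0$, together with sign contributions from the $\beta\xi f'$ and $\alpha f$ terms. Evaluating $E$ at $\xi=0$ and at a zero $\xi_0$ then gives a contradiction, so $f>0$ everywhere. Finally, the phase-space analysis with no limit cycles (ruled out, for instance, by a Lyapunov/Dulac function) forces convergence to $P$.
\end{enumerate}

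I expect the main obstacle to be step 4, and to some extent the global part of step 3. The transport terms $\alpha f+\beta\xi f'$ spoil the exact Pohozaev identity, so I would first try the combination $E$ with extra weight $\xi^{k}$ chosen so that these terms contribute $-c\,\xi^{N+1}f^{m}(f')\cdot(\dots)$ of one sign. This is the place where the precise value $\alpha/\beta=(\sigma+2)/(p-m)$ must be used. The exclusion of alternatives other than $P$ or $Q$ (oscillation, or convergence to other finite critical points) should follow from monotonicity of $Y+$const$\cdot X$ along trajectories in the region $Y<0$.
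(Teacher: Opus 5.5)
\textbf{Overall.} Your architecture matches the paper's: shoot in $A$ between an open algebraic-decay set and an open crossing set, with $A\to\infty$ governed by the stationary H\'enon equation. Your Step 2 is precisely the paper's use of the orbit $l_\infty$ in $\{X=0\}$. The problems are in Steps 1 and 3; Step 4 is repairable.

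\textbf{Step 1 is wrong.} With $f(\xi)=A\,g(s)$, $s=\lambda\xi$, the equation reads $A^{m-1}\lambda^2\big[(g^m)''+\frac{N-1}{s}(g^m)'\big]+\alpha g+\beta s g'+A^{p-1}\lambda^{-\sigma}s^{\sigma}g^p=0$. For your choice $\lambda=A^{(p-m)/(\sigma+2)}$, divide by the common coefficient of diffusion and reaction. The transport terms then carry $A^{-L/(\sigma+2)}$, with $L=\sigma(m-1)+2(p-1)>0$, which blows up as $A\to0$. So the stationary regime is $A\to\infty$, as in your own Step 2. Moreover, for $p<p_S(\sigma)$ the regular stationary solution vanishes at a finite radius, so following it would give a crossing, not decay.

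The correct small-$A$ scaling is $\lambda=A^{-(m-1)/2}$. Then the reaction carries $A^{L/2}\to0$, and the limit is the reaction-free equation $(g^m)''+\frac{N-1}{s}(g^m)'+\alpha g+\beta sg'=0$, which is the paper's plane $\{Z=0\}$ and orbit $l_0$. Here $p>p_F(\sigma)$ is exactly $\alpha<N\beta$. The identity $s^{N-1}(g^m)'+\beta s^Ng=(N\beta-\alpha)\int_0^s r^{N-1}g\,dr>0$ prevents $g$ from reaching zero and leads to the tail $s^{-\alpha/\beta}=s^{-(\sigma+2)/(p-m)}$; the paper argues instead with the barrier line $Y=-X/N$. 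Without this step nothing shows the decay set is nonempty, so neither $A_*$ nor the shooting of Step 3 is justified.

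\textbf{The Step 3 trichotomy is only asserted.} This is the heart of the proof. The danger is not limit cycles but escape to infinity with $\xi^{\sigma+2}f^{p-m}\to\infty$ (the paper's $Q_4$ and the arc $Q_\gamma$), or oscillation. The paper uses four ingredients:
\begin{itemize}
\item $\xi f'/f<0$ for all $\xi$, so $X$ increases;
\item $\xi^{(\sigma+2)/(p-m)}f$ has no interior local minima, so $Z$ is eventually monotone;
\item a nonexistence lemma at $Q_4$ and the center manifold at $Q_\gamma$, which together exclude $Z\to\infty$;
\item an analysis of the extrema of $y=Y/X$.
\end{itemize}
Your hoped-for monotone quantity $Y+cX$ (in your chart) cannot work for any $c$. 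The derivative $\dot y=-y\big(y+\frac{p-m}{\sigma+2}\big)-x(1+Ny+z)$ is positive when $x,z$ are small and $-\frac{p-m}{\sigma+2}<y<0$, and negative when $z$ is large. The term $c\,\dot x=cx((m-1)y-2x)$ changes neither fact. (Minor: in your variables the interface point is $Y=-\beta/\alpha=-(p-m)/(\sigma+2)$, not $-\beta/m$; a crossing corresponds to $Y\to-\infty$, and algebraic decay to the origin.)

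\textbf{Step 4 can be made to work.} It is a genuine alternative to the paper's argument. As written, $E'=-\frac{(N-2)(p-p_S(\sigma))}{2(p+m)}\xi^{N+\sigma-1}f^{p+m}-\xi^{N-1}f^{m-1}\mathcal{Q}(\xi f',f)$, where $\mathcal{Q}(a,b)=m\beta a^2+\big(m\alpha+\frac{N-2}{2}\beta\big)ab+\frac{N-2}{2}\alpha b^2$. Its discriminant is $\big(m\alpha-\frac{N-2}{2}\beta\big)^2$, which is positive unless $p=p_S(\sigma)$, so $E'$ has no sign.

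The fix is a correction term rather than a weight $\xi^k$. Replacing $E$ by $E+\mu\xi^Nf^{m+1}$ lowers the $ab$ and $b^2$ coefficients by $(m+1)\mu$ and $N\mu$. The optimal $\mu$ makes the form positive definite if and only if $(m+1)\alpha<N\beta$, and this holds for every $p\ge p_S(\sigma)$. The corrected functional tends to $0$ as $\xi\to0^+$ and equals $\frac12\xi_0^N\big((f^m)'(\xi_0)\big)^2\ge0$ at a zero $\xi_0$. Since it is strictly decreasing in between, both crossing and compact support are excluded.

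The paper instead traps all $l_C$ inside the cylinder over the explicit orbit $Z=-\frac{N+\sigma}{N-2}(mY+N-2)Y$. At $p=p_S(\sigma)$ the flux through it is $-(N+\sigma)X\big(1+\frac{2m}{N-2}Y\big)^2$, which is the same square $(\alpha f+\beta\xi f')^2$. The cylinder bounds $Y$ and $Z$ and gives convergence to $Q_1$ directly. Your identity gives only positivity, so you still need the Step 3 trichotomy to obtain algebraic decay.
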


A number of self-similar profiles $f(\cdot;A)$, corresponding to both ranges of the exponent $p$ considered in Theorem \ref{th.global.super}, are plotted in Figures \ref{fig1} and \ref{fig2}. 

In view of the uniqueness result for $\sigma=0$ established in \cite{Y96} (for $p\in(m,p_c(0))$) and in \cite{DH98} (for $p\in(p_c(0),p_S(0))$, one might expect that the value $A^*$ corresponding to the second item in Theorems \ref{th.global.heat} and \ref{th.global.super} is unique; that is, the self-similar solution to Eq. \eqref{eq0} with a Gaussian-like behavior as $|x|\to\infty$ and the compactly supported self-similar solution to Eq. \eqref{eq1} are also unique for $\sigma\neq0$. However, this uniqueness result overpasses by far the scope of the present work. At first, this uniqueness is still missing from literature for $m>1$ even when $\sigma=0$; as for $m=1$, one can check in the above mentioned references that the proof of the uniqueness, even for $\sigma=0$, is technically extremely involved. We will devote a forthcoming work to this uniqueness result for $m=1$.

\medskip

\noindent \textbf{Remark. Regularity of the self-similar solutions}. All the self-similar profiles described in Theorem \ref{th.global.heat} remain strictly positive for any $\xi>0$, which immediately implies that the associated solutions belong to the class $C^2(\real^N\setminus\{0\})$. Among the self-similar profiles described in Theorem \ref{th.global.super}, the compactly supported ones are no longer classical at the limit of their support, and in fact they are only Holder continuous. However, the condition on the derivative in \eqref{beh.Q5f} ensures that the contact with zero at $\xi=\xi_0\in(0,\infty)$ is sufficiently smooth in order to satisfy the weak formulation, as explained in \cite[Section 9.8]{VPME}.

The other potential lack of regularity may occur at $\xi=0$. A closer analysis of the local behavior up to second order shows that the profiles $f(\cdot;A)$ satisfy the following local expansion as $\xi\to0$
\begin{equation}\label{beh.P0f}
f(\xi;A)\sim\left\{\begin{array}{ll}\left[A^{m-1}-\frac{\alpha(m-1)}{2mN}\xi^2\right]^{1/(m-1)}, & \sigma>0, \ m>1,\\[1.5mm]
A\exp\left(-\frac{\alpha}{2N}\xi^2\right), & \sigma>0, \ m=1,\\[1mm]
\left[A^{m-p}+\frac{p-m}{m(N+\sigma)(\sigma+2)}\xi^{\sigma+2}\right]^{-1/(p-m)}, & \sigma\in(\max\{-2,-N\},0), \ m\geq1.
\end{array}\right.
\end{equation}
In view of this local expansion, the self-similar solutions in the form \eqref{forward.SS} with profiles $f(\cdot;A)$:

$\bullet$ are of class $C^2$ also at $x=0$ when $\sigma>0$. Therefore, they are solutions in the classical sense, with the exception of the compactly supported ones given in Theorem \ref{th.global.super} for $m>1$.

$\bullet$ belong to $C^1(\real^N)$ but not to $C^2(\real^N)$, for $\sigma\in[-1,0)$. In this case, the corresponding self-similar solutions are weak solutions to Eq. \eqref{eq1}, but not classical ones.

$\bullet$ are no longer of class $C^1$ at $\xi=0$ and exhibit a peak at the origin for $N\geq2$ and $\sigma\in(-2,-1)$. For these self-similar solutions $u$, the singularity of $\nabla u^m$ at $x=0$ expands as $|x|^{\sigma+1}$, ensuring that $\nabla u^m\in L^2(\real^N)$. Consequently, they remain weak solutions to Eq. \eqref{eq1}. Similar phenomena have been observed in problems involving singular weights, see \cite{RV06}, where such solutions with peaks at the origin are essential in the description of the large time behavior. A fully precise characterization of the optimal regularity at $x=0$ of these self-similar solutions is deduced following the approach in \cite[Section 3.3]{IL25}.

\section{A dynamical system. Phase space analysis}\label{sec.local}

Due to its generality, throughout this section we fix $m > 1$ and, in order to simplify the discussion, we focus first on dimensions $N\geq3$, postponing dimensions $N=1$ and $N=2$ for discussion at the end of it. The changes required in the semilinear case $m=1$ will be considered in a separate subsection. Furthermore, we introduce the following change of variables, previously employed in \cite{IS25b, IS26}:
\begin{equation}\label{PSchange}
X(\eta)=\frac{\alpha}{m}\xi^2f^{1-m}(\xi), \qquad Y(\eta)=\frac{\xi f'(\xi)}{f(\xi)}, \qquad Z(\eta)=\frac{1}{m}\xi^{\sigma+2}f^{p-m}(\xi),
\end{equation}
where $\eta=\ln\,\xi$ denotes a new independent variable and $\alpha$ is given in \eqref{SSexp}. After straightforward computations, Eq. \eqref{ODE.forward} transforms into the quadratic dynamical system
\begin{equation}\label{PSsyst}
\left\{\begin{array}{ll}\dot{X}=X(2-(m-1)Y), \\ \dot{Y}=-X-(N-2)Y-Z-mY^2-\frac{p-m}{\sigma+2}XY, \\ \dot{Z}=Z(\sigma+2+(p-m)Y).\end{array}\right.
\end{equation}
We proceed by analyzing the trajectories in the phase space associated with system \eqref{PSsyst}, extracting from them information about the profiles and their asymptotic behavior. This requires a detailed study of the critical points of the system, both finite and infinite. In some parts, we will be brief, as many technical arguments closely resemble those already presented in detail in previous works by the authors such as  \cite{ILS23,IMS23b,IS26}. Note first that, within our range of interest (non-negative self-similar profiles), we have $X\geq0$, $Z\geq0$, and that the planes $\{X=0\}$ and $\{Z=0\}$ remain invariant under the dynamics of the system \eqref{PSsyst}.

\subsection{Finite critical points}

A simple inspection of the system \eqref{PSsyst} shows that there are three finite critical points in the range $m>1$, namely
\begin{equation*}
P_0=(0,0,0), \ \ P_1=\left(0,-\frac{N-2}{m},0\right), \ \ P_2=\left(0,-\frac{\sigma+2}{p-m},\frac{(N-2)(\sigma+2)(p-p_c(\sigma))}{(p-m)^2}\right).
\end{equation*}
We next present the local analysis of these critical points, noting that the point $P_2$ exists only when $p>p_c(\sigma)$ and $P_2=P_1$ for $p=p_c(\sigma)$ (recalling that $p_c(\sigma)$ is defined in \eqref{pSpc}). The behavior of the orbits in a neighborhood of these points is summarized in the following technical result, whose proof is completely identical to that of \cite[Lemmas 2.1-2.3]{IS26}.
\begin{lemma}\label{lem.P0P1P2}
Consider the dynamical system \eqref{PSsyst} and its critical points $P_0$, $P_1$, and $P_2$. Then their local structure and the behavior of the associated profiles as $\xi\to0$ are as follows:
\begin{enumerate}
\item in a neighborhood of $P_0$, the system admits a two-dimensional unstable manifold and a one-dimensional stable manifold. The unstable manifold can be described as the one-parameter family of trajectories
\begin{equation}\label{lC}
(l_C): \ \left\{\begin{array}{ll}Y(\eta)=-\frac{X(\eta)}{N}-\frac{Z(\eta)}{N+\sigma}+o(|X(\eta),Z(\eta)|), \\[1mm]
Z(\eta)\sim CX(\eta)^{(\sigma+2)/2},\end{array}\right. \quad {\rm as} \ \eta\to-\infty,
\end{equation}
where $C\in[0,\infty]$ (understanding $C=0$ for the trajectory contained in the invariant plane $\{Z=0\}$ and $C=\infty$ for the trajectory contained in the invariant plane $\{X=0\}$). Trajectories leaving this point generate profiles with the local behavior described in \eqref{beh.P0f} and, in particular, the trajectory $(l_C)$ for a given $C\in(0,\infty)$ corresponds to the profile $f(\cdot;A)$ with
\begin{equation}\label{bij}
A=(Cm)^{2/L}\left(\frac{\alpha}{m}\right)^{(\sigma+2)/L}, \quad L:=\sigma(m-1)+2(p-1).
\end{equation}
\item The critical point $P_1$ is either an unstable node if $p<p_c(\sigma)$ or a saddle point with a two-dimensional unstable manifold contained in the invariant plane $\{Z=0\}$ and a one-dimensional stable manifold contained in the invariant plane $\{X=0\}$ if $p\geq p_c(\sigma)$. In the range $p<p_c(\sigma)$, the trajectories leaving $P_1$ correspond to profiles presenting a vertical asymptote of the form
\begin{equation}\label{beh.P1.super}
f(\xi)\sim C\xi^{-(N-2)/m}, \qquad {\rm as} \ \xi\to0, \qquad C>0.
\end{equation}
\item Let $p>p_c(\sigma)$. Then the critical point $P_2$ is either an unstable node or focus if $p\in(p_c(\sigma),p_S(\sigma))$ or a saddle point with a stable two-dimensional manifold contained in the invariant plane $\{X=0\}$ and a unique orbit going out of it if $p\geq p_S(\sigma)$. In both cases, the trajectories contained in the unstable manifold of the point $P_2$ correspond to profiles presenting a vertical asymptote in the form
\begin{equation}\label{beh.P2.super}
f(\xi)\sim \left[\frac{m(\sigma+2)(N-2)(p-p_c(\sigma))}{(p-m)^2}\right]^{1/(p-m)}\xi^{-(\sigma+2)/(p-m)}, \ {\rm as} \ \xi\to0.
\end{equation}
\end{enumerate}
\end{lemma}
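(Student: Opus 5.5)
The proof is a standard local analysis near each finite critical point, done by linearizing \eqref{PSsyst}. We then use the invariant planes $\{X=0\}$ and $\{Z=0\}$ to locate the manifolds, and finally undo the change of variables \eqref{PSchange} to read off the profiles.

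\textbf{The point $P_0$.} The Jacobian of \eqref{PSsyst} at $P_0$ is
$$
M(P_0)=\left(\begin{array}{ccc}2&0&0\\-1&-(N-2)&-1\\0&0&\sigma+2\end{array}\right),
$$
with eigenvalues $2$, $-(N-2)$ and $\sigma+2>0$. Hence, for $N\geq3$, the unstable manifold is two-dimensional and the stable manifold is the $Y$-axis. The eigenvectors of the positive eigenvalues are $(N,-1,0)$ and $(0,-1,N+\sigma)$. These give the first-order expression of $Y$ in $(l_C)$, and it can be confirmed by substituting a power expansion into the $\dot Y$ equation.

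Since $\dot X\sim2X$ and $\dot Z\sim(\sigma+2)Z$ near $P_0$, we get $X\sim c_1e^{2\eta}$ and $Z\sim c_2e^{(\sigma+2)\eta}$, so that $Z\sim CX^{(\sigma+2)/2}$. Going back to $\xi$, we have $Y=\xi f'/f\to0$, so $f(\xi)\to A>0$. Plugging $X\to\frac{\alpha}{m}\xi^2A^{1-m}$ and $Z\to\frac1m\xi^{\sigma+2}A^{p-m}$ into $Z=CX^{(\sigma+2)/2}$ yields
$$
A^{p-m+(m-1)(\sigma+2)/2}=Cm\left(\frac{\alpha}{m}\right)^{(\sigma+2)/2},
$$
with exponent $L/2$, which is exactly \eqref{bij}.

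The second-order expansions \eqref{beh.P0f} come from integrating $Y=\xi f'/f$, using whichever of $X$ or $Z$ dominates: $X$ when $\sigma>0$, $Z$ when $\sigma<0$.

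\textbf{The point $P_1$.} The Jacobian has eigenvalues $2+(m-1)(N-2)/m>0$ in the $X$-direction and $N-2>0$ in the $Y$-direction. In the $Z$-direction the eigenvalue is
$$
\sigma+2-\frac{(p-m)(N-2)}{m}=\frac{(N-2)(p_c(\sigma)-p)}{m}.
$$
This gives an unstable node for $p<p_c(\sigma)$ and a saddle for $p>p_c(\sigma)$. In the saddle case, invariance of the planes places the manifolds as stated: the unstable one in $\{Z=0\}$, the stable one in $\{X=0\}$.

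The case $p=p_c(\sigma)$ needs a center manifold analysis, since the $Z$-eigenvalue vanishes. The center manifold direction lies in $\{X=0\}$. There I would compute the flow on it and show that $Z$ decreases, which makes the orbit stable, as asserted. The profile behavior \eqref{beh.P1.super} follows from $Y\to-(N-2)/m$.

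\textbf{The point $P_2$.} At $P_2$ the $X$-eigenvalue is $2+(m-1)(\sigma+2)/(p-m)>0$ and decouples. What remains is the $2\times2$ block in $(Y,Z)$ inside $\{X=0\}$. Its determinant is positive, namely $(\sigma+2)Z_2(p-m)>0$, where $Z_2$ denotes the $Z$-coordinate of $P_2$. Its trace is
$$
-(N-2)+\frac{2m(\sigma+2)}{p-m},
$$
which is positive exactly when $p<p_S(\sigma)$. Hence for $p_c(\sigma)<p<p_S(\sigma)$ the point is an unstable node or focus, while for $p\geq p_S(\sigma)$ the block is stable (a center-type degeneracy at $p=p_S(\sigma)$) and a single orbit goes out along the $X$-eigendirection.

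Finally, $Z\to Z_2$ together with $Y\to-(\sigma+2)/(p-m)$ gives \eqref{beh.P2.super}.

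\textbf{Main obstacle.} The delicate cases are the non-hyperbolic ones: $p=p_c(\sigma)$ at $P_1$ and $p=p_S(\sigma)$ at $P_2$. At $p_S(\sigma)$ the eigenvalues are purely imaginary, so I would invoke the Hamiltonian-like structure of the system in the plane $\{X=0\}$ to get stability. For these steps I would follow \cite[Lemmas 2.1-2.3]{IS26}.
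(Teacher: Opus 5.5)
For the non-endpoint values of $p$, your route is the paper's. You compute the Jacobians at $P_0$, $P_1$, $P_2$, use the invariance of $\{X=0\}$ and $\{Z=0\}$ together with uniqueness of stable/unstable manifolds, and then undo \eqref{PSchange}. Your derivation of \eqref{bij} and \eqref{beh.P0f} is exactly the sketch the paper gives for $P_0$. There are two minor slips:
\begin{enumerate}
\item The determinant of the $(Y,Z)$-block at $P_2$ is $(p-m)Z_2$, with no factor $\sigma+2$.
\item $Y\to0$ alone does not give $f\to A>0$, and $Y\to-(N-2)/m$ alone does not give $f\sim C\xi^{-(N-2)/m}$. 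You need the exponential rate of convergence supplied by hyperbolicity, so that $\int (Y-Y_*)\,d\eta$ converges at $-\infty$.
\end{enumerate}

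The genuine gap is at the two non-hyperbolic endpoints. The paper passes over them by declaring all points hyperbolic; you noticed them, but your plan for them cannot work, because the computations go the other way.

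\textbf{The case $p=p_c(\sigma)$.} Put $Y=-(N-2)/m+v$ in $\{X=0\}$. Then $\dot v=(N-2)v-Z-mv^2$ and $\dot Z=(p-m)vZ$. The center manifold is $v=Z/(N-2)+O(Z^2)$, and the reduced flow is $\dot Z=\frac{p-m}{N-2}Z^2+O(Z^3)$. So $Z$ \emph{increases} on it. The attracting half-branch lies in $\{Z<0\}$. In the region $\{Z\geq0\}$ of interest, the merged point $P_1=P_2$ is a repeller (the transcritical exchange of $P_1$ and $P_2$), just like the unstable node for $p<p_c(\sigma)$. It is not a saddle with a stable orbit in $\{X=0\}$.

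\textbf{The case $p=p_S(\sigma)$.} The plane $\{X=0\}$ is the phase plane of $v''+\frac{N-1}{r}v'+r^{\sigma}v^{p/m}=0$ with $v=f^m$. The substitution $v=r^{-(N-2)/2}\phi(\ln r)$ turns this into the conservative equation $\phi''-\frac{(N-2)^2}{4}\phi+\phi^{p/m}=0$. Its first integral makes $P_2$ a \emph{center}, surrounded by closed orbits filling the region bounded by the invariant curve \eqref{cylinder}. Since in addition $X$ grows near $P_2$, no orbit other than $P_2$ itself tends to $P_2$ as $\eta\to\infty$. A Hamiltonian-type structure can only give neutral stability, never the two-dimensional stable manifold claimed.

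Hence at these two values the asserted structure has to be corrected, not proved. The saddle descriptions in items 2 and 3 hold for $p>p_c(\sigma)$ and $p>p_S(\sigma)$ respectively. This does not affect the rest of the paper: for $p=p_S(\sigma)$ the main proof uses only the cylinder argument and the connection $l_\infty\to P_1$, where $P_1$ is a genuine saddle.
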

\begin{proof}[Proof of the statement related to $P_0$]
As mentioned before, the proof is identical to the one of \cite[Lemmas 2.1-2.3]{IS26} and, in fact, follows almost straightforwardly from the calculation of the Jacobian matrix of the vector field of the system \eqref{PSsyst} at the critical points, as all of them are hyperbolic. For the sake of completeness, we give here a sketch of the description of the unstable manifold of $P_0$, as it will be useful in the sequel. The linearization of the system \eqref{PSsyst} at $P_0$ has the matrix
$$
\left(
  \begin{array}{ccc}
    2 & 0 & 0 \\
    -1 & -(N-2) & -1 \\
    0 & 0 & \sigma+2 \\
  \end{array}
\right),
$$
with eigenvalues $\lambda_1=2$, $\lambda_2=-(N-2)<0$, $\lambda_3=\sigma+2>0$ and corresponding eigenvectors $e_1=(N,-1,0)$, $e_2=(0,1,0)$ and $e_3=(0,1,-(N+\sigma))$. We then deduce from the Stable Manifold Theorem \cite[Section 2.7]{Pe} that the unstable manifold of $P_0$ is tangent to the subspace spanned by the eigenvectors $e_1$ and $e_3$, which readily gives the linear approximation of $Y(\eta)$ in \eqref{lC}. Moreover, we infer from the first and third equation of \eqref{PSsyst} that, in a first approximation, we have
\begin{equation}\label{interm3}
Z(\eta)\sim CX(\eta)^{(\sigma+2)/2}, \qquad {\rm as} \ \eta\to-\infty,
\end{equation}
for any $C\in[0,\infty)$, completing the proof of the approximation $l_C$. Note here that the limiting case $C=0$ in \eqref{interm3} corresponds directly to the trajectory fully contained in the invariant plane $\{Z=0\}$, while for the limiting case $C\to\infty$, we can write \eqref{interm3} equivalently as
$$
X(\eta)\sim\left[\frac{1}{C}Z(\eta)\right]^{2/(\sigma+2)}, \qquad {\rm as} \ \eta\to-\infty,
$$
and $C=\infty$ clearly corresponds in this way to the unique trajectory leaving $P_0$ contained in the invariant plane $\{X=0\}$. The correspondence \eqref{bij} follows by direct calculation from \eqref{interm3} after replacing $X$, $Z$ by their definitions in \eqref{PSchange}. Noting that
\begin{equation}\label{interm2}
\frac{Z(\eta)}{X(\eta)}\sim CX(\eta)^{\sigma/2}\to\begin{cases}
                                                    0, & \mbox{if } \sigma>0 \\
                                                    \infty, & \mbox{if } \sigma<0
                                                  \end{cases}, \qquad {\rm as} \ \eta\to-\infty,
\end{equation}
the local expansion \eqref{beh.P0f} is readily deduced by an integration step after neglecting in each case the lower order term between $X(\eta)$ and $Z(\eta)$ in \eqref{lC} and then undoing the change of variable \eqref{PSchange}. Details of this step are easy and omitted here, since they are given in the proof of \cite[Lemma 2.1]{IS26}.
\end{proof}

\noindent \textbf{Remark.} When $p>p_S(\sigma)$, the solution corresponding to the only orbit contained in the unstable manifold of $P_2$ can be explicitly written, as it is the stationary solution
\begin{equation*}
u(x)=K|x|^{-(\sigma+2)/(p-m)}, \qquad K=\left[\frac{m(\sigma+2)(N-2)(p-p_c(\sigma))}{(p-m)^2}\right]^{1/(p-m)}.
\end{equation*}
Actually, this stationary solution exists for $p>p_c(\sigma)$.

\subsection{Critical points at infinity}

To study the critical points of the system \eqref{PSsyst} located at infinity, we employ the compactification known as the Poincar\'e hypersphere. Following the theory presented in \cite[Section 3.10]{Pe}, we introduce the projective coordinates
$$
X=\frac{\overline{X}}{W}, \qquad Y=\frac{\overline{Y}}{W}, \qquad Z=\frac{\overline{Z}}{W}.
$$
In these variables, the equilibria on the hypersphere at infinity correspond to the solutions of the system in \cite[Theorem 4, Section 3.10]{Pe}, which in our setting reads
\begin{equation*}
\left\{\begin{array}{ll}\frac{1}{\sigma+2}\overline{X}\overline{Y}[(p-m)\overline{X}+(\sigma+2)\overline{Y}]=0,\\
(p-1)\overline{X}\overline{Z}\overline{Y}=0,\\
\frac{1}{\sigma+2}\overline{Y}\overline{Z}[p(\sigma+2)\overline{Y}+(p-m)\overline{X}]=0,\end{array}\right.
\end{equation*}
together with the requirement of lying on the equator of the Poincar\'e hypersphere. The latter requirement imposes the constraint $W=0$ and the normalization condition
$$
\overline{X}^2+\overline{Y}^2+\overline{Z}^2=1.
$$
Taking into account that $\overline{X}\geq0$ and $\overline{Z}\geq0$, the solutions of the above system yield the following set of critical points at infinity:
\begin{equation*}
\begin{split}
&Q_1=(1,0,0,0), \ \ Q_{2,3}=(0,\pm1,0,0), \ \ Q_4=(0,0,1,0), \ \ Q_{\gamma}=\left(\gamma,0,\sqrt{1-\gamma^2},0\right),\\
&Q_5=\left(\frac{\sigma+2}{\sqrt{(\sigma+2)^2+(p-m)^2}},-\frac{p-m}{\sqrt{(\sigma+2)^2+(p-m)^2}},0,0\right),
\end{split}
\end{equation*}
with $\gamma\in(0,1)$. We now turn to the study of these critical points. We give only a brief overview, and refer the reader to \cite{IMS23b, IS26} for the corresponding detailed computations. For the points $Q_1$, $Q_5$ and $Q_{\gamma}$, we work with the system obtained after projecting onto the $X$ variable, following \cite[Theorem 5(a), Section 3.10]{Pe}. This yields
\begin{equation}\label{PSinf1}
\left\{\begin{array}{ll}\dot{x}=x[(m-1)y-2x],\\
\dot{y}=-y^2-\frac{p-m}{\sigma+2}y-x-Nxy-xz,\\
\dot{z}=z[(p-1)y+\sigma x],\end{array}\right.
\end{equation}
with the lowercase variables defined by
\begin{equation}\label{change2}
x=\frac{1}{X}, \qquad y=\frac{Y}{X}, \qquad z=\frac{Z}{X},
\end{equation}
and the independent variable in \eqref{PSinf1} satisfies the differential equation
\begin{equation}\label{changexi}
\frac{d\eta_1}{d\xi}=\frac{\alpha}{m}\xi f^{1-m}(\xi)=\dfrac{X(\xi)}{\xi}.
\end{equation}
In these coordinates, the equilibria corresponding to $Q_1$, $Q_5$ and $Q_{\gamma}$ become
$$
Q_1=(0,0,0), \ \ Q_5=\left(0,-\frac{p-m}{\sigma+2},0\right), \ \ Q_{\gamma}=(0,0,\kappa), \ \ {\rm where} \ \kappa=\frac{\sqrt{1-\gamma^2}}{\gamma}\in(0,\infty).
$$
The study of the local behavior for the critical points $Q_1$ and $Q_\gamma$ of system \eqref{PSinf1} proceeds along the same lines as in \cite[Lemma 2.5]{IS26} (and its proof), which we summarize in the following technical result.
\begin{lemma}\label{lem.Q1Qg}
\begin{enumerate}
  \item The critical point $Q_1$ has two-dimensional center manifolds with stable flow together with a one-dimensional stable manifold. It thus behaves as a stable node for trajectories coming from the finite part of the phase space associated to the system \eqref{PSsyst}. The corresponding profiles satisfies the asymptotic behavior \eqref{beh.Q1f} as $\xi\to\infty$.
  \item For every $\gamma\in(0,1)$, the critical point $Q_{\gamma}$ only admits trajectories contained entirely in the invariant plane $\{x=0\}$ of the system \eqref{PSinf1}.
\end{enumerate}
\end{lemma}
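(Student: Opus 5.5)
We work directly with system \eqref{PSinf1} and linearize at its critical points $Q_1=(0,0,0)$ and $Q_\gamma=(0,0,\kappa)$.

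\textbf{The point $Q_1$.} At $Q_1$ the Jacobian matrix is
$$
\left(\begin{array}{ccc} 0 & 0 & 0\\ -1 & -\frac{p-m}{\sigma+2} & 0\\ 0 & 0 & 0\end{array}\right).
$$
Its eigenvalues are $0$ (double) and $-(p-m)/(\sigma+2)<0$. This gives a one-dimensional stable manifold in the $y$-direction and a two-dimensional center manifold tangent to the plane spanned by $(1,-(\sigma+2)/(p-m),0)$ and $(0,0,1)$.

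We approximate the center manifold by $y=h(x,z)=-\frac{\sigma+2}{p-m}x+O(|(x,z)|^2)$. Substituting this into the equations for $x$ and $z$ gives the reduced flow
$$
\dot x=-x^2\Big[2+\frac{(m-1)(\sigma+2)}{p-m}\Big]+\dots,\qquad \dot z=xz\Big[\sigma-\frac{(p-1)(\sigma+2)}{p-m}\Big]+\dots .
$$
The bracket in $\dot z$ simplifies:
$$
\sigma(p-m)-(p-1)(\sigma+2)=-\big(\sigma(m-1)+2(p-1)\big)=-L<0.
$$
The bracket in $\dot x$ is positive. Hence on the center manifold, in the region $x>0$, both $x$ and $z$ decrease to zero. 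The flow is therefore stable, and $Q_1$ attracts trajectories coming from $x>0$, which is the finite part of the phase space. The higher order terms (for instance $-xz$ in $\dot y$, which feeds into $h$) do not affect the signs; we would use the Center Manifold Theorem as in \cite{IS26} to make this precise.

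\textbf{Profile behavior at $Q_1$.} On such trajectories we have $y/x\to -(\sigma+2)/(p-m)$, that is,
$$
Y=\frac{\xi f'}{f}\to-\frac{\sigma+2}{p-m}.
$$
This alone only gives the exponent up to $o(1)$, so we must show that the limit $L(A)$ exists and is positive. We use $z/x=Z\to$ a constant. Along the center manifold, $\dot z\sim -Lxz/(p-m)$ and $\dot x\sim -cx^2$, so $z\sim Kx^{L/(c(p-m))}$. A short computation gives
$$
c(p-m)=2(p-m)+(m-1)(\sigma+2)=L,
$$
so $z\sim Kx$ and $Z=z/x\to K$. We expect $K\in(0,\infty)$ for trajectories with $z>0$. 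Since $Z=\xi^{\sigma+2}f^{p-m}/m$, this yields $\xi^{(\sigma+2)/(p-m)}f\to (mK)^{1/(p-m)}$, which is \eqref{beh.Q1f}. We would also check that $\xi\to\infty$ along these trajectories, using \eqref{changexi} together with $X\to\infty$.

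\textbf{Main obstacle.} The hardest step is proving $K\in(0,\infty)$, rather than merely $Z\to$ some limit, because the leading-order rates of $x$ and $z$ coincide. We plan to handle it by the resonance computation above: write $z/x=w$ and show $\dot w=O(x^2 w)$ on the center manifold, so that $\ln w$ converges. This works because $\int x\,d\eta_1$ diverges only logarithmically, while $\int x^2\,d\eta_1$ is finite.

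\textbf{The points $Q_\gamma$.} At $Q_\gamma=(0,0,\kappa)$ the Jacobian matrix has first row $(0,0,0)$. Near $x=0$ we have $\dot x=x[(m-1)y-2x]$, and since $y\to0$ at $Q_\gamma$ the multiplier of $x$ tends to $0$. We therefore cannot conclude from the linearization and instead argue with $\dot y$. Near $Q_\gamma$,
$$
\dot y=-y^2-\frac{p-m}{\sigma+2}y-x(1+Ny+z),
$$
so on a small neighborhood with $x>0$ we get $\dot y<-\frac{p-m}{\sigma+2}y-\frac{x}{2}$.

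Suppose a trajectory with $x>0$ converges to $Q_\gamma$. Then $y\to0$ and $x\to0$, and $\dot x\approx(m-1)xy$. If $y$ stays positive, the inequality for $\dot y$ forces $y$ to decrease; once $y<0$, it is pushed further negative by the $-x$ term and cannot return to $0$. Consequently $y$ must be negative for large $\eta_1$, with
$$
\dot y\le -\frac{p-m}{\sigma+2}\,y-\frac{x}{2}.
$$
Comparing this against $\dot z=z[(p-1)y+\sigma x]$ then shows that $z$ cannot converge to $\kappa$ unless $x\equiv0$, since the drift $(p-1)y+\sigma x$ is not integrable. The robust version of this step is to compute the center manifold $y\approx -\frac{\sigma+2}{p-m}x(1+\kappa)$, giving $\dot z\approx -cxz$ and $\dot x\approx -c'x^2$. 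Then $z\sim x^{c/c'}\to0\neq\kappa$, a contradiction, so no trajectory with $x>0$ reaches $Q_\gamma$ and only trajectories in the plane $\{x=0\}$ remain.
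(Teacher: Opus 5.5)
Your proposal is correct and takes essentially the paper's route. At $Q_1$, the center-manifold reduction gives exactly the paper's reduced flow $\dot x=-x^2/\beta$, $\dot z=-xz/\beta$, since both of your brackets equal $-L/(p-m)=-1/\beta$. For $Q_\gamma$, your ``robust version'' uses the same center manifold $y=-\frac{\sigma+2}{p-m}(1+\kappa)x+o(x)$ that the paper computes in the proof of Theorem~\ref{th.global.super}. Your resonance argument for $Z=z/x\to K\in(0,\infty)$ supplies the profile asymptotics that the paper delegates to the literature.

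Two adjustments for $Q_\gamma$. First, drop the preliminary comparison argument. The bound $\dot y<-\frac{p-m}{\sigma+2}y-\frac{x}{2}$ is in general too weak to make the drift $(p-1)y+\sigma x$ negative. Also, $y$ does tend to $0^-$ there, contrary to ``cannot return to $0$''. Second, state explicitly that an orbit converging to $Q_\gamma$ approaches the center manifold exponentially fast (Carr's Lemma~1 in Section~2.4, as the paper invokes). That is what justifies applying the reduced-flow computation to such an orbit.
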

\begin{proof}[Proof of the statement related to $Q_1$]
Due to its importance in the analysis and for the sake of completeness, we give a sketch of the proof of the statement related to the critical point $Q_1$. The linearization of the system \eqref{PSinf1} in a neighborhood of $Q_1$ has the matrix
$$M(Q_1)=\left(
         \begin{array}{ccc}
           0 & 0 & 0 \\[1mm]
           1 & -\frac{p-m}{\sigma+2} & 0 \\[1mm]
           0 & 0 & 0 \\
         \end{array}
       \right).
$$
By introducing the change of variable
$$
w=\frac{p-m}{\sigma+2}y+x,
$$
in order to put the system \eqref{PSinf1} in the canonical form for an application of the center manifold theory according to \cite{Carr} (see also \cite[Section 2.12]{Pe}) and by applying the center manifold theorem \cite[Theorem 3, Section 2.5]{Carr} (see some more details in the proof of \cite[Lemma 2.1]{IMS23}), we derive the equation of the center manifold of $Q_1$ as
\begin{equation*}
w=-\frac{(\sigma+2)(N-2)(p_c(\sigma)-p)}{(p-m)^2}x^2-xz+o(|(x,z)|^2).
\end{equation*}
We next replace
$$
y=\frac{\sigma+2}{p-m}(w-x)
$$
in the first and third equation of the system \eqref{PSinf1} according to \cite[Theorem 2, Section 2.4]{Carr} to deduce the direction of the flow. More precisely, the reduced system on the center manifold is given by
\begin{equation*}
\left\{\begin{array}{ll}\dot{x}&=-\frac{1}{\beta}x^2+x^2O(|(x,z)|),\\[1mm]
\dot{z}&=-\frac{1}{\beta}xz+xO(|(x,z)|^2),\end{array}\right.
\end{equation*}
and the stability of the flow on any center manifold in a neighborhood of $Q_1$ is obvious due to the positivity of $x$, $z$ and $\beta$. Since $p>m$, the only nonzero eigenvalue is also negative and thus the critical point $Q_1$ behaves like an attractor, as claimed.
\end{proof}
An important difference with respect to the previously mentioned references appears in the analysis of the critical point $Q_5$, which is essential in the proof of our main results. Let us recall here that we are working with $m>1$.
\begin{lemma}\label{lem.Q5}
The critical point $Q_5$ is a saddle point, with a two-dimensional stable manifold and a one-dimensional unstable manifold, the latter being contained in the invariant $y$-axis. The trajectories approaching $Q_5$ display the interface asymptotic behavior described in \eqref{beh.Q5f} and, more precisely,
\begin{equation}\label{beh.Q5m}
		f(\xi)\sim\left[\frac{(m-1)(p-m)\alpha}{m(\sigma+2)}\xi_0\right]^{1/(m-1)}(\xi_0-\xi)^{1/(m-1)}, \quad {\rm as} \ \xi\to\xi_0, \quad \xi<\xi_0.
\end{equation}
\end{lemma}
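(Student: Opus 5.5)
Our plan is to linearize system \eqref{PSinf1} at $Q_5=(0,-\frac{p-m}{\sigma+2},0)$ and then translate the local behavior back into the original variables, as was done for $Q_1$. Set $y_0=-\frac{p-m}{\sigma+2}$. The Jacobian of \eqref{PSinf1} at $Q_5$ is lower triangular in the appropriate ordering. The $x$-equation gives $\partial_x\dot x=(m-1)y_0$. The $y$-equation gives $\partial_y\dot y=-2y_0-\frac{p-m}{\sigma+2}=\frac{p-m}{\sigma+2}$, together with $\partial_x\dot y=-1-Ny_0$ and $\partial_z\dot y=0$. The $z$-equation gives $\partial_z\dot z=(p-1)y_0$. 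The eigenvalues are therefore
$$
\lambda_1=-\frac{(m-1)(p-m)}{\sigma+2}<0,\quad \lambda_2=\frac{p-m}{\sigma+2}>0,\quad \lambda_3=-\frac{(p-1)(p-m)}{\sigma+2}<0,
$$
all nonzero since $m>1$ and $p>m$. Hence $Q_5$ is a hyperbolic saddle with a two-dimensional stable and a one-dimensional unstable manifold. The planes $\{x=0\}$ and $\{z=0\}$ are invariant, so their intersection, the $y$-axis, is invariant too. On this axis the flow is $\dot y=-y(y-y_0)$, which is repelling from $y_0$ in the $y$-direction. The unstable manifold is therefore the $y$-axis itself, as claimed.

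Next we derive the profile behavior. Along a trajectory entering $Q_5$ through its stable manifold with $x>0$, we have $y\to y_0$ and $x,z\to0$. From $\dot x=x((m-1)y-2x)$ we get $x$ decaying like $e^{\lambda_1\eta_1}$, and likewise $z$ decays exponentially; this also holds when the approach is along the $\lambda_3$ direction, since $x$ still decays at least exponentially. In the original variables, $x\to0$ means $X\to\infty$, i.e. $\xi^2f^{1-m}\to\infty$. Moreover $Y/X=y\to y_0$, so
$$
\frac{\xi f'(\xi)}{f(\xi)}\sim -\frac{p-m}{\sigma+2}\,\frac{\alpha}{m}\xi^2f^{1-m}(\xi),
$$
which can be rewritten as
$$
(f^{m-1})'(\xi)\sim-\frac{(m-1)(p-m)\alpha}{m(\sigma+2)}\,\xi.
$$

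The main obstacle is to show that $\xi$ stays bounded, i.e. that the approach to $Q_5$ happens at a finite point $\xi_0$ rather than as $\xi\to\infty$. We will use \eqref{changexi}, which reads $d\eta_1/d\xi=X/\xi$, so that $d\xi=\xi x\,d\eta_1$. We also have $d\ln\xi/d\eta_1=x$, and $x$ decays exponentially in $\eta_1$, so $\int^\infty x\,d\eta_1<\infty$. Hence $\ln\xi$ converges to a finite limit $\ln\xi_0$ as $\eta_1\to\infty$, with $\xi_0\in(0,\infty)$. Since $X\to\infty$ at $\xi_0<\infty$, we get $f(\xi)\to0$ as $\xi\to\xi_0$. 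Integrating the asymptotic relation for $(f^{m-1})'$ between $\xi$ and $\xi_0$, with $\xi\sim\xi_0$, gives
$$
f^{m-1}(\xi)\sim\frac{(m-1)(p-m)\alpha}{m(\sigma+2)}\,\xi_0(\xi_0-\xi),
$$
which is \eqref{beh.Q5m}.

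Finally we check the interface condition in \eqref{beh.Q5f}. Since $(f^m)'=\frac{m}{m-1}f\,(f^{m-1})'$ and $(f^{m-1})'$ stays bounded near $\xi_0$ while $f\to0$, we get $(f^m)'(\xi_0)=0$. This completes the interface behavior described in \eqref{beh.Q5f}.
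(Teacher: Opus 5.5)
Your proposal is correct and follows essentially the same route as the paper: the same triangular linearization at $Q_5$ with $\lambda_1<0<\lambda_2$ and $\lambda_3<0$, the unstable manifold identified with the invariant $y$-axis, and the limit $y\to-\frac{p-m}{\sigma+2}$ translated into $(f^{m-1})'(\xi)\to-\frac{(m-1)(p-m)\alpha}{m(\sigma+2)}\xi_0$. Your finiteness argument (integrability of $d\ln\xi/d\eta_1=x$) is a reformulation of the paper's observation that $dx/d\eta\to\lambda_1$, and integrating replaces L'H\^opital's rule; your explicit check that $(f^m)'\to0$ at $\xi_0$ supplies a step the paper leaves implicit.
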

\begin{proof} The linearization of system \eqref{PSinf1} in a neighborhood of the critical point $Q_5$ is given by the matrix
$$
M(Q_5)=\left(
         \begin{array}{ccc}
           \frac{-(m-1)(p-m)}{\sigma+2} & 0 & 0 \\[1mm]
           -1+\frac{N(p-m)}{\sigma+2} & \frac{p-m}{\sigma+2} & 0 \\[1mm]
           0 & 0 & -\frac{(p-m)(p-1)}{\sigma+2} \\
         \end{array}
       \right),
$$
with eigenvalues satisfying, under our range of parameters, that
$$
\lambda_1=-\dfrac{(m-1)(p-m)}{\sigma+2}<0,\qquad\lambda_2=\dfrac{p-m}{\sigma+2}>0,\qquad\lambda_3=-\dfrac{(p-1)(p-m)}{\sigma+2}<0,
$$
and the corresponding eigenvectors
$$
e_1=\left(1,\dfrac{\sigma+2-N(p-m)}{m(p-m)},0\right),\qquad e_2=(0,1,0), \qquad e_3=(0,0,1).
$$
Therefore, from the uniqueness of an unstable manifold \cite[Theorem 3.2.1]{GH} and the invariance of the $y$-axis for the system \eqref{PSinf1}, we obtain that the unstable manifold is entirely contained in the $y$-axis. On the other hand, on the two-dimensional stable manifold, the trajectories enter to $Q_5$ tangent to the plane spanned by $e_1$ and $e_3$.

The local behavior in a neighbourhood of the critical point $Q_5$ follows from the fact that $y(\eta_1)\to-(p-m)/(\sigma+2)$ as $\eta_1\to\infty$. Replacing this limit into the first equation of \eqref{PSinf1}, we obtain, in a first order approximation, that
$$
\frac{dx}{d\eta_1}=-\frac{(p-m)(m-1)}{\sigma+2}x(\eta_1)+o(x(\eta_1)),
$$
which, with respect to the independent variable $\eta=\ln\,\xi$, writes
$$
\frac{dx}{d\eta}=\frac{1}{x}\frac{dx}{d\eta_1}\sim-\frac{(p-m)(m-1)}{\sigma+2}.
$$
This first order approximation readily implies that $\eta\to\eta_0\in\real$ on trajectories approaching $Q_5$. Thus, the fact that $\eta_1\to\infty$ along orbits entering $Q_5$ through its stable manifold translates, at the level of the profile, into a limit $\xi\to\xi_0\in(0,\infty)$ from the left. From this, and by reverting the change of variables \eqref{change2}, we obtain
$$
Y\sim-\dfrac{p-m}{\sigma+2}X,\qquad{\rm as}\ \xi\to\xi_0\in(0,\infty),\quad \xi<\xi_0,
$$
which, by undoing the change of variables \eqref{PSchange}, becomes after easy algebraic manipulations
$$
\lim\limits_{\xi\to\xi_0^{-}}(f^{m-1})'(\xi)=-\frac{(m-1)(p-m)\alpha}{m(\sigma+2)}\xi_0.
$$
We finally infer from the latter limit by an application of L'Hopital's rule that
$$
\lim\limits_{\xi\to\xi_0^{-}}\frac{f^{m-1}(\xi)}{\xi_0-\xi}=\frac{(m-1)(p-m)\alpha}{m(\sigma+2)}\xi_0,
$$
leading to the claimed interface behavior \eqref{beh.Q5f}. Actually, one directly obtains the more precise local approximation \eqref{beh.Q5m}.
\end{proof}
To analyse the critical points $Q_2$ and $Q_3$, we rely on the projected system in the $Y$ variable, obtained according to \cite[Theorem 5(b), Section 3.10]{Pe}, namely
\begin{equation}\label{PSinf2}
\left\{\begin{array}{lll}\pm\dot{x}=-x-Nxw-\frac{p-m}{\sigma+2}x^2-x^2w-xzw,\\[1mm]
\pm\dot{z}=-pz-(N+\sigma)zw-\frac{p-m}{\sigma+2}xz-xzw-z^2w,\\[1mm]
\pm\dot{w}=-mw-(N-2)w^2-\frac{p-m}{\sigma+2}xw-xw^2-zw^2,\end{array}\right.
\end{equation}
where the new variables $x$, $z$, $w$ are obtained from those of the system \eqref{PSsyst}, through the projection
\begin{equation*}
x=\frac{X}{Y}, \qquad z=\frac{Z}{Y}, \qquad w=\frac{1}{Y}.
\end{equation*}
Thus, in a neighborhood of the critical points $Q_2$ and $Q_3$, the flow of the system \eqref{PSsyst} becomes topologically equivalent to that of system \eqref{PSinf2} near the origin, with the minus corresponding to $Q_2$ and the plus sign to $Q_3$. The study of the dynamics at these points is relatively direct and follows the same approach as in \cite[Lemma 3.3]{IS25b}, to which we refer. This yields the same behavior as described in \cite[Lemma 2.6]{IS26}.
\begin{lemma}\label{lem.Q23}
The critical points $Q_2$ and $Q_3$ are, respectively, an unstable node and a stable node. The trajectories emerging from $Q_2$ correspond to profiles $f(\xi)$ for which there exists some $\xi_0\in(0,\infty)$ and $\delta>0$ such that
\begin{equation*}
f(\xi_0)=0, \qquad (f^m)'(\xi_0)=C>0, \qquad f>0 \ {\rm on} \ (\xi_0,\xi_0+\delta),
\end{equation*}
Similarly, the trajectories entering the stable node $Q_3$ correspond to profiles $f(\xi)$ for which there exists some $\xi_0\in(0,\infty)$ and $\delta\in(0,\xi_0)$ such that
\begin{equation*}
f(\xi_0)=0, \qquad (f^m)'(\xi_0)=-C<0, \qquad f>0 \ {\rm on} \ (\xi_0-\delta,\xi_0).
\end{equation*}
\end{lemma}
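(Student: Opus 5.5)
The plan is to analyse system \eqref{PSinf2} near its origin, which by the projection $x=X/Y$, $z=Z/Y$, $w=1/Y$ represents $Q_2$ (minus sign) and $Q_3$ (plus sign). The linearization at the origin is diagonal. Taking the plus sign first, the matrix is $\mathrm{diag}(-1,-p,-m)$, and all three eigenvalues are negative because $p>m\geq1$. The origin is therefore hyperbolic and a stable node, so $Q_3$ is a stable node. With the minus sign the eigenvalues flip sign, and $Q_2$ is an unstable node. Hyperbolicity gives the local phase portrait at once by the Hartman–Grobman theorem, so nothing further is needed for the topological classification.

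Next I will translate the approach to the origin into profile information, working with $Q_3$. Along trajectories entering it we have $w=1/Y\to0^+$, so $Y\to+\infty$ in the plus-sign case. One must be careful with signs here: on the hemisphere representing $Q_3$ we have $\overline{Y}=-1$, so in fact $Y\to-\infty$, and the orientation convention in \eqref{PSinf2} absorbs this sign. I will verify this by checking the sign of $Y$ in the original variables.

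From the $w$-equation, $w$ decays like $e^{-m s}$ in the projected time $s$. I need to relate $s$ to $\eta$. Since $\dot Y\approx -mY^2$ near infinity in \eqref{PSsyst}, we get $Y(\eta)\sim \frac{1}{m(\eta-\eta_0)}$. This forces $Y\to-\infty$ as $\eta\to\eta_0^-$, with $\eta_0$ finite. Hence $\xi\to\xi_0=e^{\eta_0}\in(0,\infty)$ from the left. The case of $Q_2$ is the mirror image: $Y\to+\infty$ as $\xi\to\xi_0^+$.

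To obtain the precise profile behaviour I will undo the change of variables. The relation $Y\sim \frac{1}{m\ln(\xi/\xi_0)}\sim \frac{\xi_0}{m(\xi-\xi_0)}$ means $\xi f'/f\sim \xi_0/(m(\xi-\xi_0))$. Integrating gives $f^m\sim C|\xi-\xi_0|$, which is equivalent to $(f^m)'(\xi_0)=\mp C\neq0$ with $f(\xi_0)=0$.

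To make this rigorous I will compute $(f^m)'=mf^{m-1}f'=\frac{m}{\xi}f^mY$ in terms of the projected variables. Here $f^m$ is expressed through $X$ and $\xi$, namely $f^{m-1}=\alpha\xi^2/(mX)$. This gives $(f^m)' = \frac{\alpha \xi}{X}\,f\,Y=\frac{\alpha\xi f}{x}$ with $x=X/Y$. Along the flow, $x$ and $f$ both tend to $0$ exponentially in $s$. Comparing the $x$-equation ($\dot x\approx -x$) with $f$ via $\frac{d}{ds}\ln f$ shows that $f/x$ tends to a finite nonzero limit. I expect this to be the main obstacle: establishing a finite positive limit $C$ for $(f^m)'$ rather than just a sign. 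I will handle it by writing $\frac{d}{ds}\ln(f/x)$ and showing it is integrable; it is $O(w+x+z)$, which decays exponentially.

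Positivity of $f$ on a one-sided neighbourhood of $\xi_0$ follows because the trajectory lies in the region $X>0$ before reaching the point. This matches the statement of \cite[Lemma 2.6]{IS26}, whose proof can then be followed verbatim.
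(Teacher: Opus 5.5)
Your proposal is correct and takes essentially the same route as the paper, which defers to \cite[Lemma 3.3]{IS25b} and \cite[Lemma 2.6]{IS26}. In both, the diagonal linearization $\mathrm{diag}(\mp1,\mp p,\mp m)$ of \eqref{PSinf2} at the origin gives the two hyperbolic nodes, and undoing the projection gives the contact behaviour at $\xi_0$: first $Y$ blows up at a finite $\eta$ because $\dot Y\approx-mY^2$, and then the decay of $x$ is compared with that of $f$. Your integrability step is sound: near $Q_3$ one has $d\eta/ds=-w$, hence $\frac{d}{ds}\ln f=-1$ exactly and $\frac{d}{ds}\ln(\xi f/|x|)=(N-1)w+\frac{p-m}{\sigma+2}x+xw+zw$. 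Note also that in the $Q_3$ chart $w=1/Y\to0^-$ and $x<0$, which is precisely what produces the sign $(f^m)'(\xi_0)<0$.
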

We are therefore left with the critical point $Q_4$, whose local study is considerably more delicate due to its non-hyperbolic nature. Nevertheless, by applying the same approach as in \cite[Section 4 and Appendix]{IMS23b}, one can show that no relevant trajectories enter or leave $Q_4$ from the finite part of the phase space. We recall that, from the location of the point on the Poincar\'e hypersphere, on the trajectories connecting to $Q_4$ we have the following limits
\begin{equation}\label{interm19}
Z(\eta)\to\infty, \quad \frac{Z(\eta)}{X(\eta)}\to\infty, \quad \frac{Z(\eta)}{Y(\eta)}\to\pm\infty.
\end{equation}
We add up for the moment the condition
\begin{equation}\label{interm20}
\frac{Z(\eta)}{X^2(\eta)}\to\infty, \quad {\rm or, equivalently,} \quad x(\eta_1)z(\eta_1)\to\infty,
\end{equation}
all the previous limits being taken as $\eta\to\eta^+$, where $\eta^+$ (which can be either finite or infinite) is the maximal interval of definition of the trajectory. The following technical result, showing that \eqref{interm19} and \eqref{interm20} (expressed in terms of profiles after undoing the change of variable \eqref{PSchange}) are incompatible with the equation \eqref{ODE.forward}, can be reconstructed from the arguments provided in \cite[Appendix]{IMS23b}.
\begin{lemma}\label{lem.Q4}
There are no solutions $f$ to Eq. \eqref{ODE.forward} satisfying simultaneously the following conditions
\begin{equation*}
\begin{split}
&\xi^{\sigma}f(\xi)^{p-1}\to\infty, \qquad \xi^{\sigma+2}f(\xi)^{p-m}\to\infty, \qquad \xi^{\sigma-2}f(\xi)^{m+p-2}\to\infty, \\
&\xi^{\sigma+1}f(\xi)^{p-m+1}(f')^{-1}(\xi)\to\pm\infty,
\end{split}
\end{equation*}
where the limits may occur in any of the possible cases $\xi\to0$, $\xi\to\xi_0\in(0,\infty)$ or $\xi\to\infty$.
\end{lemma}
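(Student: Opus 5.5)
We argue by contradiction and show that, under the stated limits, the source term $\xi^\sigma f^p$ dominates every other term in \eqref{ODE.forward}; this forces $(f^m)'$ to vary too fast compared with $f'$ itself, contradicting the last limit. Assume a solution $f$ satisfies all four limits along some $\xi\to\xi^+$, where $\xi^+\in\{0,\xi_0,\infty\}$. Write \eqref{ODE.forward} in divergence form,
$$
\left(\xi^{N-1}(f^m)'\right)'=-\xi^{N-1}\left(\alpha f+\beta\xi f'+\xi^\sigma f^p\right).
$$
The three limits in the first line compare $\xi^\sigma f^p$ with $f$ (ratio $\xi^\sigma f^{p-1}$) and with the diffusion scale $\xi^{-2}f^m$ (ratio $\xi^{\sigma+2}f^{p-m}$). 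The fourth limit compares $\xi^\sigma f^p$ with $\xi^{-1}f^{m-1}f'$, i.e. with the first-order diffusion term $\xi^{-1}(f^m)'$, and it also shows that $f'$ has a definite sign near $\xi^+$.

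The key steps are as follows.

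\textbf{(i) Dominance of the reaction.} Using the limits, we show that $\alpha f$ and $\frac{N-1}{\xi}(f^m)'$ are $o(\xi^\sigma f^p)$. For $\beta\xi f'$ the ratio to $\xi^\sigma f^p$ is $\beta\xi^{1-\sigma}f'/f^p$. We control it by multiplying the reciprocal of the fourth limit by the reciprocal of the third, $\xi^{2-\sigma}f^{2-m-p}$, together with the second limit, so that it tends to zero as well. This combination uses exactly the extra condition $Z/X^2\to\infty$.

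\textbf{(ii) The reduced equation.} By step (i), $(f^m)''\sim-\xi^\sigma f^p$ near $\xi^+$, so $(f^m)''<0$ there and $(f^m)'$ is monotone.

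\textbf{(iii) Integration.} We integrate $(f^m)''\approx-\xi^\sigma f^p$ once over an interval near $\xi^+$, using the monotonicity of $f$ from the sign of $f'$. In each of the three cases this should give a bound of the form
$$
|(f^m)'|\gtrsim \xi^{\sigma+1}f^p \quad\text{or}\quad |(f^m)'|\lesssim \xi^{\sigma+1}f^p,
$$
up to the boundary term. Either inequality contradicts the fourth limit, which states $\xi^{\sigma+1}f^{p}/(f^{m-1}f')\to\pm\infty$, i.e. $|(f^m)'|=o(\xi^{\sigma+1}f^p)$. Alternatively, we can compare with the energy-type identity obtained by multiplying by $(f^m)'$:
$$
\frac12\left[((f^m)')^2\right]'\approx -m\xi^\sigma f^{p+m-1}f',
$$
whose integration yields $((f^m)')^2\sim c\,\xi^\sigma f^{p+m}$ if $\xi^\sigma$ varies slowly. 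Combining this with the third limit gives $|(f^m)'|\gg$ the quantity allowed by the fourth limit.

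\textbf{Main obstacle.} Step (iii) must be done case by case ($\xi\to0$, $\xi\to\xi_0$, $\xi\to\infty$) with the sign of $f'$. The weight $\xi^\sigma$ is not slowly varying when $\xi\to0$ or $\xi\to\infty$, so the integration requires L'H\^opital-type arguments on ratios such as $(f^m)'/(\xi^{\sigma+1}f^p)$ and handling of the boundary constants. Here I would follow the scheme of \cite[Appendix]{IMS23b}. I would first try the L'H\^opital route on $\xi^{\sigma+1}f^{p}/(f^m)'$: its derivative quotient is governed by $(f^m)''\sim-\xi^\sigma f^p$, which forces the limit to be finite. That contradicts the fourth limit being $\pm\infty$.
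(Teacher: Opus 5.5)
\textbf{Step (i) does not follow from the hypotheses.} In the variables \eqref{PSchange} the ratio of $\beta\xi f'$ to $\xi^{\sigma}f^{p}$ is $\frac{\beta}{\alpha}\,\frac{XY}{Z}$. The four limits only say $Z\to\infty$, $X/Z\to0$, $X^2/Z\to0$ and $Y/Z\to0$. The product you describe is, up to constants, $(Y/Z)(X^2/Z)\,Z=X^2Y/Z$. That is $X$ times the quantity you need, and it is of the form $0\cdot0\cdot\infty$ anyway. Since $XY/Z=X\cdot(Y/Z)$ and nothing bounds $X$ (you only have $X=o(Z^{1/2})$), the condition $Z/X^2\to\infty$ does not do the job you assign to it. When $\xi\to0$ or $\xi\to\xi_0\in(0,\infty)$, the second limit forces $f\to\infty$, so $X=\frac{\alpha}{m}\xi^2f^{1-m}$ stays bounded and step (i) is correct there. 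As $\xi\to\infty$, however, $X$ may be unbounded (e.g.\ $X=\alpha\xi^2$ for $m=1$). The limits alone then allow $f'$ to have spikes with $Z/X\lesssim|Y|\ll Z$. So the drift term is not shown to be negligible in the case $\xi\to\infty$, which is exactly where an argument is needed.

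\textbf{Step (iii) contains logical errors.} The fourth limit is equivalent to $|(f^m)'|=o(\xi^{\sigma+1}f^{p})$, so an upper bound $|(f^m)'|\lesssim\xi^{\sigma+1}f^{p}$ is implied by it, not contradicted; only a lower bound helps.

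In the energy route, $\xi^{\sigma/2}f^{(p+m)/2}/(\xi^{\sigma+1}f^{p})=(\xi^{\sigma+2}f^{p-m})^{-1/2}\to0$, so the size you obtain is compatible with the fourth limit. What the identity really gives is $\frac12((f^m)')^2+\frac{m}{p+m}\xi^{\sigma}f^{p+m}\approx\mathrm{const}$, so $c<0$. Any contradiction comes from that sign, not from magnitudes.

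The L'H\^opital route also fails. Since $(\xi^{\sigma+1}f^p)'=\xi^{\sigma}f^{p}(\sigma+1+pY)$, the quotient of derivatives is $-(\sigma+1+pY)(1+o(1))$, which is unbounded whenever $Y$ is. At a finite blow-up point $Y$ must be unbounded, since $\ln f\to\infty$ on a bounded interval.

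\textbf{What is missing is the \emph{sign structure} of \eqref{ODE.forward}.} Set $h=f^m$ and fix the sign of $f'$ by the fourth limit. The sign/endpoint combinations not listed below contradict $f\to\infty$ directly by monotonicity.
\begin{itemize}
\item If $f'>0$, every term in $h''=-[\frac{N-1}{\xi}h'+\alpha f+\beta\xi f'+\xi^\sigma f^p]$ is positive. Near $\xi_0^-$, $h'$ is then bounded above, so $h$ is bounded, contradicting $f\to\infty$. As $\xi\to\infty$, $h''\le-\alpha f(\xi_1)$ for $\xi>\xi_1$, which forces $h'\to-\infty$.
\item If $f'<0$ and $f\to\infty$ (i.e.\ $\xi\to0$ or $\xi\to\xi_0^+$), your step (i) is valid. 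Hence $h''<0$, the negative function $h'$ is bounded near the endpoint, and $h$ is bounded, a contradiction.
\item Only $f'<0$, $\xi\to\infty$ needs an estimate. Absorb the drift term via
\[
\bigl(\xi^{N-1}h'+\beta\xi^{N}f\bigr)'=-\xi^{N-1}\bigl[(\alpha-N\beta)f+\xi^{\sigma}f^{p}\bigr]\le-\tfrac12\,\xi^{N-1+\sigma}f^{p}\quad(\xi\ \text{large}),
\]
using the first limit. Integrate, and use that $f$ is decreasing on $[\xi/2,\xi]$. When $\int^\infty s^{N-1+\sigma}f^p\,ds=\infty$ this gives $|h'|\ge c\,\xi^{\sigma+1}f^{p}$, the lower bound that contradicts the fourth limit. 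If the integral converges, then $\xi^{N+\sigma}f^p\to0$ and $\xi^{N-1}h'+\beta\xi^Nf$ has a limit $\le0$. A negative limit again contradicts the fourth limit. A zero limit forces $f$ to decay faster than any power, contradicting $\xi^{\sigma}f^{p-1}\to\infty$.
\end{itemize}
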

On the other hand, introducing the change of variables $w=xz$ in \eqref{PSinf1}, we obtain the dynamical system
\begin{equation}\label{PSsyst3.ext}
\left\{\begin{array}{lll}\dot{x}=x[(m-1)y-2x],\\[1mm]
\dot{y}=-y^2-\frac{p-m}{\sigma+2}y-x-Nxy-w,\\[1mm]
\dot{w}=w[(\sigma-2)x+(m+p-2)y],\end{array}\right.
\end{equation}
According to the Lemma \ref{lem.Q4}, any trajectory that could potentially connect to $Q_4$ must satisfy \eqref{interm19} and thus \eqref{interm20} should not hold true, that is, $w(\eta_1)<\infty$. Consequently, such trajectories can approach only those finite critical points of system \eqref{PSsyst3.ext} for which $w<\infty$. The system possesses exactly two critical points with this property,
$$
Q_1'=(0,0,0),\qquad Q_5'=(0,-\dfrac{p-m}{\sigma+2},0).
$$
The analysis of their local behavior proceeds in exactly the same manner as in Lemmas \ref{lem.Q1Qg} and \ref{lem.Q5} (we leave the straightforward details to the reader). We thus have:
\begin{lemma}\label{lem.Q4.bis}
All the trajectories of system \eqref{PSsyst3.ext} that connect to the critical points $Q_1'$ and $Q_5'$ are the same ones inherited from the corresponding critical points $Q_1$ and $Q_5$.
\end{lemma}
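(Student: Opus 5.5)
The statement has two parts. First, the local structure of $Q_1'$ and $Q_5'$ for \eqref{PSsyst3.ext} must coincide with that of $Q_1$ and $Q_5$ for \eqref{PSinf1}. Second, every orbit reaching $Q_1'$ or $Q_5'$ must be one already counted at $Q_1$ or $Q_5$.

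We linearize \eqref{PSsyst3.ext} at both points. At $Q_1'=(0,0,0)$ the Jacobian is
$$
\left(\begin{array}{ccc}0&0&0\\-1&-\frac{p-m}{\sigma+2}&-1\\0&0&0\end{array}\right),
$$
so there is one negative eigenvalue $-(p-m)/(\sigma+2)$ and a two-dimensional center subspace. Mimicking the proof of Lemma \ref{lem.Q1Qg}, we set $v=\frac{p-m}{\sigma+2}y+x+w$. The center manifold is then $v=O(|(x,w)|^2)$, so on it
$$
y=-\frac{\sigma+2}{p-m}(x+w)+O(|(x,w)|^2).
$$
The reduced flow becomes
$$
\dot x=-\frac{(m-1)(\sigma+2)}{p-m}x(x+w)-2x^2+\dots,\qquad \dot w=w\Big[(\sigma-2)x-\frac{(m+p-2)(\sigma+2)}{p-m}(x+w)\Big]+\dots .
$$
Since $x,w\ge0$, the quantity $x+w$ is nonincreasing to leading order; at least $\dot x\le0$ holds, and $\dot w$ needs a short check, so $Q_1'$ is an attractor. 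This matches Lemma \ref{lem.Q1Qg}.

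At $Q_5'$ the Jacobian is lower triangular. The $x$- and $y$-entries are those of $M(Q_5)$. The $w$-row has diagonal entry $-(m+p-2)(p-m)/(\sigma+2)<0$, and the $y$-row acquires an entry $-1$ in the $w$-column. The eigenvalues are therefore $\lambda_1<0$, $\lambda_2>0$ and $\lambda_3'<0$. The unstable manifold is the invariant $y$-axis and the stable manifold is two-dimensional, exactly as in Lemma \ref{lem.Q5}.

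For the correspondence of orbits, note that for $x>0$ the map $w=xz$ is a diffeomorphism of $\{x>0, z>0\}$ onto $\{x>0, w>0\}$ that preserves time. Hence every orbit coming from the finite part of the phase space (where $x>0$) and entering $Q_1'$ or $Q_5'$ is the image of an orbit of \eqref{PSinf1}. Along such an orbit, $x\to0$, $y\to0$ or $y\to-(p-m)/(\sigma+2)$, and $w\to0$.

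The real issue is whether $z=w/x$ stays bounded. If $z\to z_\infty<\infty$, the original orbit enters $Q_1$, $Q_5$ or $Q_\gamma$. By Lemma \ref{lem.Q1Qg}, $Q_\gamma$ admits no orbits from $\{x>0\}$, so only $Q_1$ or $Q_5$ remain.

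It remains to exclude $z\to\infty$ with $w\to0$. We use the reduced dynamics: $\dot z/z=(p-1)y+\sigma x$. Near $Q_5'$, $y\to-(p-m)/(\sigma+2)<0$, so $z$ decays exponentially. Near $Q_1'$ we have $y\approx-\frac{\sigma+2}{p-m}(x+w)$, so $\dot z/z\approx\sigma x-(p-1)\frac{\sigma+2}{p-m}(x+w)$. We compare this with $\dot x/x\approx-\frac{(m-1)(\sigma+2)}{p-m}(x+w)-2x$. Integrating shows that $\ln z$ is bounded by a multiple of $\ln(1/x)$ along the orbit. This gives $z\lesssim x^{-\theta}$, and we then need to check that the resulting behavior matches \eqref{beh.Q1f}, or else is absorbed by the center manifold of $Q_1$.

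The main obstacle is this last step. The two center manifolds at $Q_1'$ must be compared carefully to rule out orbits with $xz\to0$ but $z\to\infty$. If needed, I would fall back on Lemma \ref{lem.Q4} in the weakened form in which only the first three limits hold.
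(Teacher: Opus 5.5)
\textbf{There is a genuine gap.} You have isolated the right issue, but you leave it unresolved, and it is the whole content of the lemma. Lemma \ref{lem.Q4.bis} is used only to rule out orbits reaching $Q_4$. So what must be shown is that no orbit with $x>0$ enters $Q_1'$ or $Q_5'$ with $z=w/x\to\infty$.

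Your treatment of $Q_5'$, where $\dot z/z\to-(p-1)(p-m)/(\sigma+2)<0$, is fine, as is the exclusion of the points $Q_\gamma$ via Lemma \ref{lem.Q1Qg}. At $Q_1'$, however, the bound $z\lesssim x^{-\theta}$ you aim for leaves open exactly the case that has to be excluded. The fallback does not work either. The third limit in Lemma \ref{lem.Q4} is
$$\xi^{\sigma-2}f^{m+p-2}=\frac{\alpha^2}{m}\,\frac{Z}{X^2}=\frac{\alpha^2}{m}\,w\to\infty,$$
and this is precisely what fails at $Q_1'$, where $w\to0$. Any version of Lemma \ref{lem.Q4} usable there would have to drop this hypothesis. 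No such statement is available; it is in effect what the present lemma is supposed to supply.

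\textbf{The missing idea} is a simplification of the formula you already wrote. Since
$$\sigma-\frac{(p-1)(\sigma+2)}{p-m}=-\frac{\sigma(m-1)+2(p-1)}{p-m}=-\frac{1}{\beta},$$
on the center manifold $y=-\frac{\sigma+2}{p-m}(x+w)+O(|(x,w)|^2)$ of $Q_1'$ you get
$$\frac{\dot z}{z}=(p-1)y+\sigma x=-\frac{1}{\beta}\,x-\frac{(p-1)(\sigma+2)}{p-m}\,w+O(|(x,w)|^2)<0 .$$
So $z$ decreases near $Q_1'$.

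Equivalently, since $z/x=Z$,
$$\frac{d}{d\eta_1}\ln Z=(p-m)y+(\sigma+2)x=-(\sigma+2)w+O(|(x,w)|^2).$$
Hence $Z$ stays bounded and $z\to0$ like $x$, which is the algebraic-tail behavior at $Q_1$. This matches the reduced flow $\dot x\approx-\frac{1}{\beta}x^2$, $\dot w\approx-\frac{2}{\beta}xw$, which forces $w\sim Kx^2$. That is the computation the paper has in mind when it says the analysis proceeds as in Lemmas \ref{lem.Q1Qg} and \ref{lem.Q5}.

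For orbits not on the center manifold, $y-h(x,w)=O(e^{-\mu\eta_1})$ by the exponential attraction lemma \cite[Lemma 1, Section 2.4]{Carr}, and this error is integrable. So $\ln z$ is a nonincreasing function plus a convergent one, and $z\to z_\infty\in[0,\infty)$. A limit $z_\infty>0$ is excluded by Lemma \ref{lem.Q1Qg}(2). Hence $z_\infty=0$ and the orbit is one of those entering $Q_1$.
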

Thus, these two critical points will not play any specific role in the analysis that follows.

\subsection{Dimensions $N=1$ and $N=2$}

Since all the previous analysis has been carried out in dimension $N\geq3$, it remains to address the low-dimensional cases $N=1$ and $N=2$, when $p_c(\sigma)=p_S(\sigma)=+\infty$ and we are always in the regime corresponding to $p<p_S(\sigma)$ considered in the higher-dimensional analysis. Thus, the critical point $P_2$ is absent and the local structure of the system \eqref{PSsyst} near the remaining critical points $P_0$ and $P_1$ exhibits some technical differences with respect to the case $N\geq 3$. However, the qualitative analysis of the phase space in neighborhoods of these points follows the same scheme developed in \cite{IS26} and is collected in the following result.
\begin{lemma}\label{lem.N2N1}
(a) In dimension $N=2$, the critical points $P_0$ and $P_1$ coincide. Denoting by $P_0$ the resulting point, it is a saddle-node with a leading three-dimensional center-unstable manifold tangent to the $Y$-axis and a non-leading two-dimensional unstable manifold. The orbits in the center-unstable manifold tangent to the $Y$-axis give rise to profiles exhibiting a vertical asymptote of the form
	\begin{equation}\label{beh.P0.N2}
		f(\xi)=D(-\ln\,\xi)^{1/m}, \qquad {\rm as} \ \xi\to0, \qquad D>0,
	\end{equation}
On the other hand, the orbits lying in the unstable manifold correspond to profiles displaying one of the local behaviors \eqref{beh.P0f}, depending on the sign of $\sigma$.

\medskip

(b) In dimension $N=1$, the critical point $P_0$ is a stable node, while $P_1$ is a saddle point with a two-dimensional unstable manifold and a one-dimensional stable manifold contained in the $Y$-axis. The profiles associated with the orbits emanating from $P_0$ exhibit either the local behavior
	\begin{equation}\label{beh.P0.N1}
		f(\xi)\sim\left[A-K\xi\right]^{-2/(1-m)}, \qquad {\rm as} \ \xi\to0,
	\end{equation}
where $A>0$ and $K\in\real\setminus\{0\}$ are arbitrary constants, or one of the local behaviors \eqref{beh.P0f}, according to the sign of $\sigma$.
\end{lemma}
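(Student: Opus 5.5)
The plan is a local analysis of the critical points of \eqref{PSsyst} in the two low dimensions, using the Jacobian matrix and, where it is non-hyperbolic, center manifold theory, following \cite{IS26}. The case distinction comes from the eigenvalue $\lambda_2=-(N-2)$ of the linearization at $P_0$ computed in the proof of Lemma \ref{lem.P0P1P2}. Since $P_1=(0,-(N-2)/m,0)$, this eigenvalue vanishes when $N=2$, so $P_1=P_0$ there. For $N=1$ it equals $+1>0$, so $P_0$ has three positive eigenvalues $2,1,\sigma+2$ (as $\sigma>-1$). Hence $P_0$ is a source in the original time $\eta$, which is "stable node" in the orientation of the statement. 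The profiles it generates are the behaviours as $\xi\to0$.

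\textbf{Case $N=2$.} The eigenvalues at $P_0$ are $2$, $0$ and $\sigma+2$, with eigenvectors $(2,-1,0)$, $(0,1,0)$ and $(0,1,-(2+\sigma))$. By the unstable manifold theorem there is a two-dimensional unstable manifold tangent to $e_1,e_3$. The trajectories on it are parametrized exactly as in \eqref{lC} and, by the same integration step, they give the profiles \eqref{beh.P0f}.

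For the center direction (the $Y$-axis), I would restrict to $X=Z=0$, which is invariant. There $\dot Y=-mY^2$, so the flow on the center manifold is a saddle-node: it leaves $P_0$ as $\eta\to-\infty$ along $Y<0$, with $Y\sim 1/(m\eta)$. This yields a center-unstable manifold, three-dimensional when combined with the unstable directions.

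To recover the profile, I write $Y=\xi f'/f\sim -1/(m\ln(1/\xi))$ and integrate $(\ln f)'\sim -1/(m\xi(-\ln\xi))$. This gives $f\sim D(-\ln\xi)^{1/m}$. I then check that the perturbations coming from $X,Z$ decay exponentially in $\eta$, so they only affect the constant $D$; this gives \eqref{beh.P0.N2}.

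\textbf{Case $N=1$.} Here $P_1=(0,1/m,0)$. Its Jacobian has eigenvalues:
\begin{itemize}
\item $2-(m-1)/m>0$ in the $X$ direction;
\item $\sigma+2+(p-m)/m>0$ in the $Z$ direction;
\item $-(N-2)-2mY=1-2=-1$ in the $Y$ direction.
\end{itemize}
This gives a saddle whose one-dimensional stable manifold lies in the invariant $Y$-axis. For $P_0$, orbits tangent to the span of $e_1,e_3$ give \eqref{beh.P0f}. The remaining orbits have a nonzero component along $e_2=(0,1,0)$, so $Y\sim Ke^{\eta}=K\xi$, that is, $f'/f\to K$ with $K\ne0$ of either sign.

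Since the eigenvalues $2,1,\sigma+2$ may be resonant (for instance $2=2\cdot1$), I would justify this normal-form claim via Hartman--Grobman/Sternberg arguments as in \cite{IS26}. This yields $f$ with $f(0)>0$ and $f'(0)\neq0$, which can be rewritten in the form \eqref{beh.P0.N1} with $A$ and $K$ arbitrary.

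\textbf{Main obstacle.} The delicate step is the saddle-node at $N=2$. One has to justify that the three-dimensional center-unstable manifold, rather than merely the invariant $Y$-axis, consists of orbits all having the logarithmic behaviour. I would try the center manifold reduction of \cite{Carr}, showing that the reduced flow is $\dot Y=-mY^2+O(|Y|^3)$ while $X,Z$ grow like $e^{2\eta},e^{(\sigma+2)\eta}$ and therefore do not alter the leading order.
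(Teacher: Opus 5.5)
Your proposal is correct and follows essentially the route the paper relies on, since it defers the proof to \cite[Section 7]{IS26}: you linearize at $P_0$ and $P_1$, where the eigenvalue $-(N-2)$ becomes $0$ (for $N=2$) or $1$ (for $N=1$); for $N=2$ you use the saddle-node reduction $\dot Y=-mY^2$ on the invariant $Y$-axis, with the exponentially small $X$, $Z$ corrections only fixing the constant $D$; and the strong unstable manifold tangent to $e_1$, $e_3$ produces \eqref{beh.P0f}. Two small corrections: for $N=1$ the point $P_0$ is genuinely a source (unstable node) of \eqref{PSsyst}, so the ``stable node'' in the statement is a slip and not a matter of orientation; and the resonance $2=2\cdot 1$ you worry about is harmless, because the invariance of $\{X=0\}$ and $\{Z=0\}$ rules out the resonant monomials.
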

A detailed proof of Lemma \ref{lem.N2N1} can be found in \cite[Section 7]{IS26}. Nevertheless, let us stress here that the orbits of interest, which give rise to the relevant local behaviors as $\xi\to0$, are tangent to the eigenspace associated with the eigenvalues $\lambda_1=2$ and $\lambda_3=\sigma+2$ of the linearization of the system \eqref{PSsyst} at $P_0$ and this is not changed in any way in dimensions $N=1$ and $N=2$.

\subsection{Phase space analysis for $m=1$}

Fixing throughout this section $m=1$, we easily observe that most of the local analysis of the system \eqref{PSsyst} becomes just a particular case of the local analysis performed in the previous section. However, there are two differences, one of them very significant:

$\bullet$ in the analysis of the critical point $P_0$, the local expansion given in \eqref{beh.P0f} is different for $m=1$ and $\sigma>0$. Indeed, we infer from \eqref{interm2} that, for $\sigma>0$, $Z(\eta)$ becomes of lower order than $X(\eta)$ in \eqref{lC}. Thus, we deduce that
$$
\lim\limits_{\eta\to-\infty}\frac{Y(\eta)}{X(\eta)}=-\frac{1}{N},
$$
which, after undoing the change of variable \eqref{PSchange} and taking into account that $X(\xi)=\alpha\xi^2$, becomes
\begin{equation}\label{interm4}
\lim\limits_{\xi\to0}\frac{f'(\xi)}{\xi f(\xi)}=-\frac{\alpha}{N}.
\end{equation}
An application of L'Hopital's rule in \eqref{interm4} gives
$$
\lim\limits_{\xi\to0}\frac{\ln f(\xi)-\ln f(0)}{\xi^2}=-\frac{\alpha}{2N},
$$
which readily gives the local expansion \eqref{beh.P0f} for $m>1$ and $\sigma>0$.

$\bullet$ The local analysis of the critical point $Q_5$ is different, and we present it below.
\begin{lemma}\label{lem.Q5m1}
For $m=1$, the critical point $Q_5$ has a one-dimensional stable manifold, a one-dimensional unstable manifold and one-dimensional center manifolds with stable direction of the flow. The trajectories approaching $Q_5$ on the two-dimensional center-stable manifold correspond to profiles with a Gaussian-like tail at infinity as given in \eqref{beh.Q5heat}.
\end{lemma}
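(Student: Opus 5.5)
Set $m=1$ in the system \eqref{PSinf1}. The first equation becomes $\dot x=-2x^2$, and $Q_5=(0,-(p-1)/(\sigma+2),0)$. The linearization at $Q_5$ is the matrix $M(Q_5)$ from the proof of Lemma \ref{lem.Q5} with $m=1$:
$$
M(Q_5)=\left(\begin{array}{ccc} 0 & 0 & 0\\ -1+\frac{N(p-1)}{\sigma+2} & \frac{p-1}{\sigma+2} & 0\\ 0 & 0 & -\frac{(p-1)^2}{\sigma+2}\end{array}\right).
$$
Its eigenvalues are $\lambda_1=0$, $\lambda_2=(p-1)/(\sigma+2)>0$ and $\lambda_3=-(p-1)^2/(\sigma+2)<0$.

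\textbf{Structure of $Q_5$.} The eigenvectors for $\lambda_2$ and $\lambda_3$ are $e_2=(0,1,0)$ and $e_3=(0,0,1)$. The $y$-axis is invariant, so by uniqueness of the unstable manifold that manifold lies in the $y$-axis. The plane $\{x=0\}$ is also invariant. Inside it, the unique stable manifold tangent to $e_3$ is a curve $y=-(p-1)/(\sigma+2)+O(z)$. For the center direction, I would take the kernel vector $e_1=(1,\,(\sigma+2-N(p-1))/(p-1),\,0)$ and shift $y$ by $-(p-1)/(\sigma+2)$ so that $Q_5$ sits at the origin. After a linear change putting $M(Q_5)$ in block form, Carr's center manifold theorem gives a one-dimensional center manifold of the form $y=-\frac{p-1}{\sigma+2}+c_1x+O(x^2)$, $z=O(x^2)$. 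Since $\dot x=-2x^2$ exactly, the reduced flow on the center manifold is $\dot x=-2x^2+\dots<0$ for $x>0$. So the flow is stable on the physical side $x>0$, and the stable and center manifolds together form a two-dimensional center-stable manifold. This settles the first part of Lemma \ref{lem.Q5m1}.

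\textbf{Asymptotics on the center-stable manifold.} Here $x=1/X$ with $X=\alpha\xi^2$ for $m=1$, so $x\to0$ forces $\xi\to\infty$. There is no finite $\xi_0$, in contrast with Lemma \ref{lem.Q5}. Approaching $Q_5$ means $y=Y/X\to-(p-1)/(\sigma+2)$. Undoing \eqref{PSchange}, this gives
$$
\frac{\xi f'(\xi)}{f(\xi)}\sim-\frac{(p-1)\alpha}{\sigma+2}\xi^2=-\beta\xi^2,
$$
using $\alpha(p-1)/(\sigma+2)=\beta=1/2$ when $m=1$. So $f'/f\sim-\xi/2$, which gives $\ln f\sim-\xi^2/4$. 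To the first order this only yields $e^{\xi^2/4}f=e^{o(\xi^2)}$.

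\textbf{Sharpening to a finite positive limit.} This is the step I expect to be the main obstacle. I would compute the center manifold to second order and combine it with the fact that $z\to0$ exponentially in $\eta_1$ along trajectories on the center-stable manifold. The aim is an expansion $y=-\frac{p-1}{\sigma+2}+c_1x+O(x^2)$, that is,
$$
\frac{f'}{f}=-\frac{\xi}{2}+\frac{c}{\xi}+O(\xi^{-3}).
$$
Integrating gives $f\sim K\xi^{c}e^{-\xi^2/4}$. So I need $c_1$ to be such that $c=0$, equivalently $c_1$ should match $\alpha/\beta-N=2\alpha-N$ in the correct normalization.

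An alternative route is to work directly with the ODE \eqref{ODE.forward} for $m=1$. Since $f$ decays faster than any power, the reaction term $\xi^{\sigma}f^p$ is negligible. The linear equation $f''+\frac{N-1}{\xi}f'+\frac{\xi}{2}f'+\alpha f=0$ has a Kummer-type decaying solution behaving like $\xi^{2\alpha-N}e^{-\xi^2/4}$. A variation-of-constants argument, with the perturbation being integrable, would then give $\lim e^{\xi^2/4}\xi^{N-2\alpha}f=K\in(0,\infty)$. I would check carefully whether the exponent $2\alpha-N$ vanishes or is absorbed in the statement \eqref{beh.Q5heat}, and adjust the proof to whatever the precise computation gives.
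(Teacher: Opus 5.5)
There is a genuine gap, but it lies in the target statement rather than in your reasoning. Your analysis of the local structure of $Q_5$ and your first-order step $f'/f\sim-\xi/2$ are exactly the paper's proof; the structure part covers the spectrum, the unstable manifold in the invariant $y$-axis, the center direction $e_1$, the exact reduced flow $\dot x=-2x^2$, and the gluing into a center-stable manifold. The paper then goes from $\lim_{\xi\to\infty}\ln f(\xi)/\xi^2=-1/4$ straight to \eqref{beh.Q5heat}, and you are right that this is a non sequitur.

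The question you leave open cannot be closed the way you hope. For $m=1$ one has $2\alpha=(\sigma+2)/(p-1)$, so $2\alpha-N<0$ precisely because $p>p_F(\sigma)=1+(\sigma+2)/N$. Moreover, $c$ is not a free parameter. With $y=Y/X$ and $x=1/X=1/(\alpha\xi^2)$ one gets $c=c_1$ identically. And $c_1$ is already the $y$-component of the kernel vector $e_1$ you wrote down, namely $(\sigma+2)/(p-1)-N=2\alpha-N$. So no second-order center manifold computation is needed. Your remark that ``$c=0$'' is equivalent to ``$c_1$ matching $2\alpha-N$'' mixes up two different conditions; $c=0$ would force $p=p_F(\sigma)$.

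Carrying your argument to the end, the $O(x^2)$ remainder of the center manifold contributes $O(\xi^{-3})$ to $f'/f$. The deviation of the orbit from a center manifold decays exponentially in $\eta_1$, which by \eqref{changexi} behaves like $\alpha\xi^2/2$, so its contribution is $O(\xi e^{-c\xi^2})$. Both are integrable, hence
\[
\frac{f'(\xi)}{f(\xi)}=-\frac{\xi}{2}+\frac{2\alpha-N}{\xi}+O(\xi^{-3}),
\qquad
\lim_{\xi\to\infty}\xi^{N-\frac{\sigma+2}{p-1}}e^{\xi^2/4}f(\xi)=K\in(0,\infty),
\]
and therefore $e^{\xi^2/4}f(\xi)\to0$ as $\xi\to\infty$. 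Your Kummer-function route gives the same exponent. This also agrees with the classical Haraux--Weissler case $\sigma=0$, where rapidly decaying profiles behave like $\xi^{2/(p-1)-N}e^{-\xi^2/4}$.

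Consequently, the second assertion of the lemma, and \eqref{beh.Q5heat} in Theorem \ref{th.global.heat}, hold as written only at $p=p_F(\sigma)$, which lies outside the range \eqref{range.exp}. For $p>p_F(\sigma)$ the correct statement is the corrected limit above, and your method proves exactly that. Neither your proposal nor the paper's argument can establish \eqref{beh.Q5heat} itself.
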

\begin{proof}
The linearization of the system \eqref{PSinf1} in a neighborhood of $Q_5$ has the matrix
$$
M(Q_5)=\left(
         \begin{array}{ccc}
           0 & 0 & 0 \\[1mm]
           -1+\frac{N(p-1)}{\sigma+2} & \frac{p-1}{\sigma+2} & 0 \\[1mm]
           0 & 0 & -\frac{(p-1)^2}{\sigma+2} \\
         \end{array}
       \right).
$$
According to \cite[Theorem 3.2.1]{GH}, the center manifolds are all tangent to the eigenvector corresponding to the zero eigenvalue of the matrix $M(Q_5)$, which is $e_1=(1,-N+(\sigma+2)/(p-1),0)$, while the direction of the flow on the center manifolds is given by $\dot{x}=-2x^2<0$, that is, a stable direction. The center manifolds couple with the unique stable manifold to form a two-dimensional center-stable manifold. We deduce from the first equation $\dot{x}=-2x^2$ of the system \eqref{PSinf1} that, with respect to the initial independent variable $\eta=\ln\,\xi$, we have $x'(\eta)=-2x(\eta)$, that is, $x(\eta)=e^{-2\eta}$. Since $x\to0$ when approaching $Q_5$, it follows that $\eta\to\infty$. Thus, for $m=1$, the convergence $\eta_1\to\infty$ along orbits entering $Q_5$ through its center-stable manifold translates, at the level of the profile, into a behavior as $\xi\to\infty$. In order to establish the precise behavior as $\xi\to\infty$, we start from
$$
\lim\limits_{\xi\to\infty}\frac{Y(\xi)}{X(\xi)}=\lim\limits_{\eta_1\to\infty}y(\eta_1)=-\frac{p-1}{\sigma+2},
$$
which, after undoing the change of variable \eqref{PSchange}, leads to
\begin{equation}\label{interm1}
\lim\limits_{\xi\to\infty}\frac{f'(\xi)}{\xi f(\xi)}=-\frac{1}{2}.
\end{equation}
An application of L'Hopital's rule together with \eqref{interm1} ensure that
$$
\lim\limits_{\xi\to\infty}\frac{\ln f(\xi)}{\xi^2}=-\frac{1}{4},
$$
and readily gives the Gaussian-like behavior \eqref{beh.Q5heat}, completing the proof.
\end{proof}

\section{Proof of the main results}\label{sec.global.super}

In this section we give the proof of Theorems \ref{th.global.heat} and \ref{th.global.super}. Let us remark that, in view of the local analysis performed in the previous Section, we are looking for trajectories in the phase space associated to the system \eqref{PSsyst} connecting $P_0$ to either $Q_1$ or $Q_5$. Since, at the level of the phase space, the case $m=1$ is just a particular case of the general range $m\geq1$, the same proof is valid for both theorems.

The general approach for the proof is a shooting method with respect to the parameter $C\in(0,\infty)$ on the trajectories $l_C$ defined in \eqref{lC} composing the unstable manifold of $P_0$. We thus begin with two preparatory results investigating the behavior of the two limiting trajectories, that is $l_0$ (contained in the invariant plane $\{Z=0\}$) and $l_{\infty}$ (contained in the invariant plane $\{X=0\}$). Regarding this latter trajectory, we have the following result:
\begin{proposition}\label{prop.X0}
Let $N\geq1$, and $m$, $p$, $\sigma$ as in \eqref{range.exp}. Then the orbit $l_{\infty}$, stemming from $P_0$ inside the invariant plane $\{X=0\}$:

$\bullet$ connects to the stable node at infinity $Q_3$ whenever $p<p_S(\sigma)$.

$\bullet$ connects to the critical point $P_1$ when $p=p_S(\sigma)$ and has the explicit expression
\begin{equation}\label{cylinder}
Z=-\frac{N+\sigma}{N-2}(mY+N-2)Y.
\end{equation}

$\bullet$ connects to the critical point $P_2$ for $p>p_S(\sigma)$.
\end{proposition}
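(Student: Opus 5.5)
In the invariant plane $\{X=0\}$ the system \eqref{PSsyst} reduces to the planar quadratic system
\begin{equation*}
\dot Y=-(N-2)Y-Z-mY^2,\qquad \dot Z=Z(\sigma+2+(p-m)Y),
\end{equation*}
and I will study the orbit $l_\infty$ there. Lemma \ref{lem.P0P1P2} says it leaves $P_0$ tangent to $e_3=(0,1,-(N+\sigma))$, so it enters the region $\{Y<0,\,Z>0\}$. The natural tool is the curve \eqref{cylinder},
\[
\Gamma:\ Z=G(Y):=-\frac{N+\sigma}{N-2}(mY+N-2)Y .
\]
For $Y\in(-(N-2)/m,0)$ this is a parabola arc joining $P_0$ to $P_1$. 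Its tangent at $P_0$ is $Z=-(N+\sigma)Y$, which is exactly the eigendirection $e_3$.

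\textbf{Step 1: the flux across $\Gamma$.} Set $H(Y,Z)=Z-G(Y)$ and compute $\dot H$ on $\{H=0\}$:
\[
\dot H=G(Y)\big(\sigma+2+(p-m)Y\big)-G'(Y)\big(-(N-2)Y-G(Y)-mY^2\big).
\]
Using $-(N-2)Y-mY^2=-Y(mY+N-2)=\frac{N-2}{N+\sigma}G$, this becomes
\[
\dot H=G\Big[\sigma+2+(p-m)Y-G'(Y)\Big(\frac{N-2}{N+\sigma}-1\Big)\Big].
\]
Since $G'(Y)=-\frac{N+\sigma}{N-2}(2mY+N-2)$, the bracket equals $\sigma+2+(p-m)Y-\frac{\sigma+2}{N-2}(2mY+N-2)$. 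This is linear in $Y$ with vanishing constant term, namely
\[
\frac{Y}{N-2}\big[(p-m)(N-2)-2m(\sigma+2)\big]=\frac{Y(p-p_S(\sigma))}{N-2}\cdot\frac{(N-2)^2}{N-2}\cdot\frac{1}{1}.
\]
Up to a positive factor it is $Y(p-p_S(\sigma))$. So on the arc, where $Y<0$ and $G>0$, the sign of $\dot H$ is the sign of $p_S(\sigma)-p$. For $p=p_S(\sigma)$ the curve $\Gamma$ is invariant. I will double-check this algebra, but the outcome is consistent with the statement.

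\textbf{Step 2: comparing the orbit with $\Gamma$ near $P_0$.} Both $l_\infty$ and $\Gamma$ are tangent to $e_3$ at $P_0$, so I compare their second-order terms. Alternatively, uniqueness of the one-dimensional unstable manifold inside $\{X=0\}$ shows that $l_\infty=\Gamma$ when $p=p_S$. For $p\neq p_S$ I expand the unstable manifold as $Z=-(N+\sigma)Y+aY^2+o(Y^2)$ and compute $a$ from the invariance equation. I then check that $l_\infty$ starts above $\Gamma$ for $p<p_S$ and below $\Gamma$ for $p>p_S$.

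This sign comparison is the delicate point, and it must be consistent with the flux. I expect that for $p<p_S$ the flux points outward and the orbit lies outside the region $D$ bounded by $\Gamma$ and the segment $\{Z=0\}$, while for $p>p_S$ the flux points inward and the orbit lies inside $D$. Otherwise I will reverse the roles of the two sides.

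\textbf{Step 3: conclusions.}

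For $p=p_S$, the invariant arc $\Gamma$ ends at $P_1$, which gives the explicit connection \eqref{cylinder}.

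For $p>p_S$, the orbit stays in the compact region $D$. The only critical points in $\overline D$ are $P_0$, $P_1$ and $P_2$; note $P_2$ lies inside $D$ because $p>p_S>p_c$. There are no periodic orbits, which I will show with a Dulac function of the form $Y^aZ^b$ applied to the quadratic system. Poincaré–Bendixson then gives convergence to a critical point. $P_1$ is a saddle whose stable manifold lies in $\{Z=0\}$, so the orbit cannot reach it, and it cannot return to $P_0$. Hence it converges to $P_2$, consistent with $P_2$ being stable within $\{X=0\}$ when $p\geq p_S$.

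For $p<p_S$, the orbit stays outside $D$, where $\dot Y\le -Z-Y(mY+N-2)<0$. The value $Y$ cannot stay bounded: the only finite critical points outside $D$ in the quadrant, $P_2$ or $P_1$, are sources or lie on $\partial D$ and are repelling for orbits in this region. So $Y\to-\infty$. I will then show that $Z/Y$ stays bounded so that the limit is $Q_3$, which is a stable node by Lemma \ref{lem.Q23}. For this I use that $\dot Z/Z\to-\infty$ while $\dot Y\sim -mY^2$.

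The case $N\in\{1,2\}$, where $p<p_S=\infty$, is handled directly: for $N\le2$ one has $\dot Y<0$ whenever $Y<0$ (for $N=1$ use the region $Y<-1/m$ after an initial segment), and the same $Q_3$ argument applies.

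\textbf{Main obstacle.} I expect the hardest part to be Step 2 together with excluding approach to $Q_4$ or $Q_2$ when $p<p_S$. For the latter I will invoke Lemma \ref{lem.Q4} and the monotonicity of $Y$.
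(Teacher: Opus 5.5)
\noindent Your overall strategy is sound, and it is essentially the barrier argument behind the paper. The paper defers this proposition to \cite[Section 5]{IMS23b}, and its cylinder flux $E(X,Y;p)$ in part (b) of the proof of Theorem \ref{th.global.super} reduces at $X=0$ to your $\dot H=(p-p_S(\sigma))\,Y\,G(Y)$. Your Step 1 algebra is right. Step 2 comes out as you hope: the unstable manifold is $Z=-(N+\sigma)Y+aY^2+o(Y^2)$ with $a=-(N+\sigma)p/(N+2\sigma+2)$, whereas $\Gamma$ has quadratic coefficient $-(N+\sigma)m/(N-2)$. The difference is $(N+\sigma)(p_S(\sigma)-p)/(N+2\sigma+2)$, so $l_\infty$ starts outside $D$ for $p<p_S(\sigma)$ and inside $D$ for $p>p_S(\sigma)$. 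A Dulac function also exists: $B=Z^{2m/(p-m)-1}$ gives ${\rm div}(BF)=B\,(N-2)(p_S(\sigma)-p)/(p-m)$.

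\noindent The genuine gap is your exclusion of $P_1$ when $p>p_S(\sigma)$. The stable manifold of $P_1$ does \emph{not} lie in $\{Z=0\}$. In the plane $\{X=0\}$ the linearization at $P_1$ has eigenvalue $N-2>0$ along the $Y$-axis, so $\{Z=0\}$ is the \emph{unstable} direction. Its other eigenvalue is $\lambda=(N-2)(p_c(\sigma)-p)/m<0$, with eigenvector $(1,N-2-\lambda)$ in $(Y,Z)$, which points into $\{Z>0\}$; this is Lemma \ref{lem.P0P1P2}(2). Your own case $p=p_S(\sigma)$ shows this, since there $l_\infty=\Gamma$ is exactly the stable separatrix of $P_1$. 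So after Poincar\'e--Bendixson in $D$, both $P_1$ and $P_2$ remain as possible $\omega$-limits. Likewise, $l_\infty$ followed by the segment $[P_1,P_0]$ remains a conceivable separatrix cycle. The missing step is a slope comparison at $P_1$. The curve $\Gamma$ reaches $P_1$ with slope $G'(-(N-2)/m)=N+\sigma$, and
\begin{equation*}
(N-2-\lambda)-(N+\sigma)=\frac{(N-2)(p-p_S(\sigma))}{m}.
\end{equation*}
Hence for $p>p_S(\sigma)$ the stable separatrix of $P_1$ arrives from outside $\overline{D}$, so it cannot be $l_\infty$, which is trapped in $D$.

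The same computation is missing in your case $p<p_S(\sigma)$. There your claim that $P_1$ is ``repelling for orbits in this region'' is unjustified for $p_c(\sigma)\le p<p_S(\sigma)$, where $P_1$ is a saddle (non-hyperbolic at $p=p_c(\sigma)$). In that range the stable direction (center direction at $p=p_c(\sigma)$) has slope below $N+\sigma$. It therefore enters $P_1$ from inside $D$, which an orbit confined outside $D$ cannot do. Note also that $P_2$ is never outside $D$: since $Z_2=\frac{N-2}{N+\sigma}G(Y_2)<G(Y_2)$, it lies inside $D$ whenever it exists. In fact no critical point lies strictly outside $\overline{D}$. As a minor point, for $N=1$ one has $\dot Y=Y(1-mY)-Z<0$ for every $Y<0$, so no initial segment is needed there.
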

The invariant plane $\{X=0\}$ of the system \eqref{PSsyst} is completely identical to the one studied in \cite[Section 5]{IMS23b}, and we refer the reader to the quoted reference for a proof.

We now turn our attention to the analysis of the remaining limit trajectory, namely $l_0$, which corresponds to $C=0$ in \eqref{lC} and is therefore entirely contained in the invariant plane $\{Z=0\}$. At this stage, the Fujita exponent enters the picture and plays a decisive role in the behavior of the solutions.
\begin{proposition}\label{prop.Z0}
Let $N\geq1$, and $m$, $p$, $\sigma$ as in \eqref{range.exp}. Then the trajectory $l_0$ connects to the saddle point $Q_5$ when $p=p_F(\sigma)$ and to the non-hyperbolic point $Q_1$ which exhibits the behavior of a stable node when $p>p_F(\sigma)$.
\end{proposition}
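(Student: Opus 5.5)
The plan is to reduce the invariant plane $\{Z=0\}$ back to the profile equation and to exploit a conservation-type identity. On $\{Z=0\}$ the reaction term is absent, so the orbit $l_0$ corresponds to a solution of
$$
(f^m)''+\frac{N-1}{\xi}(f^m)'+\alpha f+\beta\xi f'=0,
$$
with $f(0)=A>0$ and $f'(0)=0$ (this follows from \eqref{beh.P0f} with $C=0$). Multiplying by $\xi^{N-1}$ and introducing
$$
H(\xi):=\xi^{N-1}(f^m)'(\xi)+\beta\xi^Nf(\xi),
$$
we obtain the identity $H'(\xi)=(N\beta-\alpha)\xi^{N-1}f(\xi)$ and $H(0)=0$. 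From \eqref{SSexp} we have $\alpha/\beta=(\sigma+2)/(p-m)$. Hence $N\beta-\alpha$ vanishes exactly when $p=p_F(\sigma)$ and is positive when $p>p_F(\sigma)$. In the phase variables \eqref{PSchange}, a direct computation gives
$$
\frac{H}{\beta\xi^Nf}=1+\frac{\alpha}{\beta}\,\frac{Y}{X}.
$$
Thus the sign of $H$ is the sign of $Y+\frac{p-m}{\sigma+2}X$, i.e. it records on which side of the plane through $Q_5$ the orbit lies.

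\emph{Case $p=p_F(\sigma)$.} Here $H\equiv0$, so $l_0$ lies entirely on the line $Y=-\frac{p-m}{\sigma+2}X$ in $\{Z=0\}$. Integrating $(f^m)'=-\beta\xi f$ gives the explicit Barenblatt-type profile $f^{m-1}(\xi)=A^{m-1}-\frac{(m-1)\beta}{2m}\xi^2$, which is compactly supported. Along the line, $X$ increases to infinity and $y=Y/X$ is identically $-(p-m)/(\sigma+2)$, which is exactly the point $Q_5$ in the chart \eqref{PSinf1}. The same computation with $m=1$ yields the Gaussian and again the connection to $Q_5$, in agreement with Lemma \ref{lem.Q5m1}.

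\emph{Case $p>p_F(\sigma)$.} Since $H(0)=0$ and $H'>0$ while $f>0$, we get $H>0$ as long as $f$ stays positive. First, $f$ never vanishes: at a first zero $\xi_0$ one would have $(f^m)'(\xi_0)\le0$, hence $H(\xi_0)\le0$, a contradiction. In phase-space terms, the orbit stays strictly above the plane $Y=-\frac{p-m}{\sigma+2}X$. Next, $\{Y<0\}$ is positively invariant in $\{Z=0\}$, since $\dot Y=-X<0$ on $\{Y=0,\,X>0\}$, and $l_0$ starts with $Y<0$ by \eqref{lC}. Therefore
$$
-\frac{p-m}{\sigma+2}X<Y<0
$$
along $l_0$. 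Then $\dot X\ge 2X$, so $X\to\infty$ monotonically and $y=Y/X$ stays in $\bigl(-\frac{p-m}{\sigma+2},0\bigr)$. This rules out periodic orbits, finite critical points and $Q_2,Q_3$ (sign of $Y$); it also rules out $Q_4$ and $Q_\gamma$ (since $Z\equiv0$ and Lemma \ref{lem.Q1Qg}). The only candidates left for the $\omega$-limit are $Q_1$ and $Q_5$.

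The main obstacle is excluding $Q_5$, because the bound $y>-\frac{p-m}{\sigma+2}$ is not strict in the limit. For $m>1$ I would invoke Lemma \ref{lem.Q5}: any orbit entering $Q_5$ produces a profile vanishing at a finite $\xi_0$, which contradicts the positivity of $f$ established above. For $m=1$ this argument fails, since approach to $Q_5$ means a Gaussian tail and no finite zero. Instead I would use the monotonicity of $H$: $H(\xi)\ge H(1)>0$ for $\xi\ge1$, whereas convergence to $Q_5$ forces $1+\frac{\alpha}{\beta}y\to0$ with $\xi^Nf$ decaying like a Gaussian, which makes $H\to0$, a contradiction. For $m>1$ one can run the same argument as an alternative check. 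Since $Q_1$ behaves as an attractor by Lemma \ref{lem.Q1Qg}, the orbit $l_0$ must enter $Q_1$, as claimed.
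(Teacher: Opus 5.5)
Your proof is correct, but for $p>p_F(\sigma)$ it uses a different barrier from the paper's.

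The paper works with the line $Y=-X/N$ (i.e.\ $y=-1/N$), which is invariant exactly when $p=p_F(\sigma)$. For $p>p_F(\sigma)$ it computes the flux $F(X)=\frac{p-p_F(\sigma)}{N(\sigma+2)}X^2>0$ across this line. Since $l_0$ is tangent to the same line at $P_0$, it needs the second-order expansion \eqref{interm9} to see that $l_0$ starts above it. Once $l_0$ is trapped in $\{-1/N<y<0\}$, the point $Q_5$ has $y=-\frac{p-m}{\sigma+2}<-\frac1N$, so it lies strictly outside the closed strip. Poincar\'e--Bendixson then leaves only $Q_1$, with no distinction between $m>1$ and $m=1$.

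Your identity $H'=(N\beta-\alpha)\xi^{N-1}f$ instead selects the line $Y=-\frac{p-m}{\sigma+2}X$, which passes through $Q_5$. This buys you three things. The side on which $l_0$ lies follows at once from $H(0)=0$ and $H'>0$; no second-order expansion is needed, because this line and the tangent of $l_0$ have different slopes at $P_0$. It also gives positivity of the profile directly. At $p=p_F(\sigma)$ it produces the explicit Barenblatt and Gaussian profiles.

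The price is that the barrier runs through $Q_5$ itself, so you need the extra step excluding convergence to $Q_5$. You do this correctly by combining the monotonicity of $H$ with Lemma \ref{lem.Q5} (finite extinction, $m>1$) and Lemma \ref{lem.Q5m1} (Gaussian decay, $m=1$). The unified form, $H\to0$ at $Q_5$ versus $H\geq H(1)>0$, covers both cases at once.

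Two small points of rigor:
\begin{enumerate}
\item The data $f(0)=A$, $f'(0)=0$ for $l_0$ should be read off from $Y\sim -X/N$ in \eqref{lC}. Expansion \eqref{beh.P0f} describes $C>0$ and, for $\sigma<0$, gives a different expansion.
\item At a putative first zero $\xi_0$ you should not assume $(f^m)'(\xi_0)$ exists. Instead, $H\geq H(\xi_1)>0$ together with $f\to0$ forces $(f^m)'>0$ near $\xi_0$, which contradicts $Y<0$.
\end{enumerate}
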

\begin{proof}
In the plane $\{Z=0\}$, the system \eqref{PSsyst} reduces to
\begin{equation}\label{systZ0}
\left\{\begin{array}{ll}\dot{X}=X(2-(m-1)Y),\\\dot{Y}=-X-(N-2)Y-mY^2-\frac{p-m}{\sigma+2}XY,\end{array}\right.
\end{equation}
or, equivalently, in variables $(x,y)$ given by \eqref{change2},
\begin{equation}\label{PSinf1z0}
	\left\{\begin{array}{ll}\dot{x}=x[(m-1)y-2x],\\
		\dot{y}=-y^2-\frac{p-m}{\sigma+2}y-x-Nxy.\\
		\end{array}\right.
\end{equation}
Consider the line $\{Y=-X/N\}$, which, except for the initial point $P_0$, is equivalent to $y=-1/N$ in the system \eqref{PSinf1z0}. A direct computation shows that this line is a solution to \eqref{PSinf1z0} if and only if $p=p_F(\sigma)$. Indeed, plugging $y(\eta_1)=-1/N$ in the second equation of \eqref{PSinf1z0}, we readily get that
$$
\frac{p-m}{N(\sigma+2)}=\frac{1}{N^2},
$$
which implies $p=p_F(\sigma)$. Moreover, if $p=p_F(\sigma)$ it is obvious that the trajectory $Y=-X/N$ (or $y=-1/N$) connects $P_0$ to $Q_5$, as claimed.

Consider next the normal vector $\overline{n}=(1/N,1)$ associated to the line $\{Y=-X/N\}$. The direction of the flow of the system \eqref{systZ0} across this line is given by the sign of the scalar product between $\overline{n}$ and the vector field of the system, which yields
\begin{equation}\label{interm7}
	F(X)=-\frac{(mN-Np+\sigma+2)X^2}{N^2(\sigma+2)}=\frac{p-p_F(\sigma)}{N(\sigma+2)}X^2.
\end{equation}
To determine on which side of the line $\{Y=-X/N\}$ the orbit $l_0$ emerges from $P_0$, we compute its second order expansion near $(X,Y)=(0,0)$. We proceed as in \cite[Section 2.7]{Shilnikov} to identify the Taylor expansion of the unstable manifold. We set
\begin{equation}\label{interm8}
Y=\psi(X):=-\frac{X}{N}+KX^2+o(X^2), \qquad K\in\real,
\end{equation}
then insert \eqref{interm8} into \eqref{systZ0} and compute the expansion of $\psi$. Matching coefficients gives
\begin{equation}\label{interm9}
Y(\eta)=-\frac{1}{N}X(\eta)+\frac{p-p_F(\sigma)}{N(N+2)(\sigma+2)}X^2(\eta)+o(X^2(\eta)), \qquad {\rm as} \ \eta\to-\infty.
\end{equation}
Moreover, it is obvious that the flow of the system \eqref{systZ0} across the line $\{Y=0\}$ points into the negative half-plane. We infer from the expansion \eqref{interm9} and the sign of $F(X)$ in \eqref{interm7} that, for $p>p_F(\sigma)$, the trajectory $l_0$ enters and remains in the region $\{X>0,-X/N<Y<0\}$, corresponding in the $(x,y)$-variables to the region $\{x>0,-1/N<y<0\}$, while $y(Q_5)<-1/N$, so again $Q_5$ belongs to the opposite region and cannot be the endpoint of the trajectory $l_0$. Moreover, in the $(x,y)$-variables, the trajectory remains in a compact set since $x$ is decreasing with respect to $\eta_1$, while $-1/N<y(\eta_1)<0$ for any $\eta_1\in\real$. By the Poincar\'e-Bendixon's Theorem, the trajectory $l_0$ must reach a critical point in the closed strip $\{-1/N\leq y\leq 0\}$, and the only such point is $Q_1$, completing the proof.
\end{proof}
The results derived so far provide all the ingredients needed to prove Theorems \ref{th.global.super} and \ref{th.global.heat}, the latter proof being actually (at the level of the phase space) only a particular case of the former. We recall that some parts of the argument are inspired by the proof of \cite[Theorem 1.1]{IS26}.
\begin{proof}[Proof of Theorem \ref{th.global.super}]
Fix $N\geq1$ and $m$, $p$, $\sigma$ as in \eqref{range.exp}. In view of the previous analysis of the trajectories, we distinguish the following two cases in the proof.

\medskip

\textbf{(a) Range $p_F(\sigma)<p<p_S(\sigma)$.} To analyze this range, we introduce the following sets:
\begin{equation*}
\begin{split}
&\mathcal{A}=\{C\in(0,\infty): {\rm the \ trajectory} \ l_C \ {\rm reaches \ the \ point} \ Q_3\},\\
&\mathcal{B}=\{C\in(0,\infty): {\rm the \ trajectory} \ l_C \ {\rm reaches \ neither } \ Q_3 \ {\rm nor} \ Q_1\},\\
&\mathcal{C}=\{C\in(0,\infty): {\rm the \ trajectory} \ l_C \ {\rm reaches \ the \ point} \ Q_1\}.
\end{split}
\end{equation*}
Since $Q_3$ and $Q_1$ are attractors for the system \eqref{PSsyst} as shown in Lemmas \ref{lem.Q23} and \ref{lem.Q1Qg}, it follows that the sets $\mathcal{A}$ and $\mathcal{C}$ are open. On the one hand, Proposition \ref{prop.X0} gives that the trajectory $l_{\infty}$ converges to $Q_3$ and by the stability of this critical point, the set $\mathcal{A}$ is nonempty and in fact contains an open interval of the form $(C^*,\infty)$. On the other hand, Proposition \ref{prop.Z0} ensures that the orbit $l_0$ reaches $Q_1$ and, by its stability and continuity arguments, one deduces that $\mathcal{C}$ is also nonempty and contains an interval $(0,C_*)$. From these facts and standard topology arguments, the set $\mathcal{B}$ is nonempty.

Fix now $C_0\in\mathcal{B}$. Our goal is to show that the corresponding trajectory $l_{C_0}$ reaches the saddle point $Q_5$. Note first that the direction of the flow of the system \eqref{PSsyst} across the plane $\{Y=0\}$ is given by the sign of $-X-Z\leq0$, hence the trajectories $l_C$ for any $C\in(0,\infty)$ remain forever in the region $\{X\geq0, Y<0, Z\geq0\}$. It follows then from the first equation of the system \eqref{PSsyst} that $X$ is monotonically increasing. Since the monotonicity properties of $Z$ are more delicate to analyze, we instead study the existence of its limit as $\xi\to\infty$. To this end, borrowing ideas from \cite{IS26}, we introduce the function
$$
g(\xi):=\xi^{(\sigma+2)/(p-m)}f(\xi).
$$
A straightforward computation based on \eqref{ODE.forward} shows that $g$ is a solution to the following differential equation:
\begin{equation}\label{interm11}
\begin{split}
\xi^2(g^m)''(\xi)&+\left(N-1-\frac{2m(\sigma+2)}{p-m}\right)\xi(g^m)'(\xi)+\frac{m(\sigma+2)(m\sigma+m+p)}{(p-m)^2}g^m(\xi)\\
&+\frac{p-m}{L}\xi^{(m-1)(\sigma+2)/(p-m)+3}g'(\xi)+g^p(\xi)=0.
\end{split}
\end{equation}
We observe that any solution $g(\xi)$ of \eqref{interm11} cannot admit local minima. Consequently, either $g$ is monotone or it possesses a single local maximum point and is eventually decreasing. Since
$$
g(\xi)=(mZ(\xi))^{1/(p-m)},
$$
it follows that there exists $\overline{\eta}$ such that $\eta\mapsto Z(\eta)$ is monotone for $\eta>\overline{\eta}$. Let $\eta^+$ be the upper edge of the maximum interval of definition of the trajectory $l_{C_0}$ (note that $\eta^+$ can be a priori either finite or infinite). Then, there exist (along the trajectory $l_{C_0}$) the limits
$$
X_{\infty}:=\lim\limits_{\eta\to\eta^+}X(\eta), \quad Z_{\infty}:=\lim\limits_{\eta\to\eta^+}Z(\eta).
$$
Let us prove first that $X_{\infty}=\infty$. Assume for contradiction that this is not the case, thus $X_{\infty}<\infty$. If also $Z_{\infty}<\infty$, the monotonicity (proved above) of both $X$ and $Z$ and an argument of local maxima and minima of $Y(\eta)$ similar to the one given at the end of this proof easily ensure that either the trajectory $l_{C_0}$ ends at a finite critical point (and there is none) or that there is a subsequence $\eta_k\to\eta^+$ such that $Y(\eta_k)\to-\infty$. But since $Q_3$ is a stable node, it then follows that the trajectory connects to $Q_3$ and thus $C_0\in\mathcal{A}$, which is a contradiction. We then deduce that $Z_{\infty}=\infty$, which at the same time implies that $Z$ is increasing for $\eta\in(\overline{\eta},\eta^+)$. The third equation of the system \eqref{PSsyst} yields thus that
\begin{equation}\label{interm12}
-\frac{\sigma+2}{p-m}<Y(\eta)<0, \quad \eta\in(\overline{\eta},\eta^+),
\end{equation}
whence the trajectory $l_{C_0}$ connects to the critical point $Q_4$, contradicting the outcome of Lemmas \ref{lem.Q4} and \ref{lem.Q4.bis}. These contradictions imply that $X_{\infty}=\infty$.

Suppose first that $Z_{\infty}=\infty$. Then, as explained above, \eqref{interm12} holds true, which implies that, in the $(x,y,z)$-coordinates defined by \eqref{change2}, the $\omega$-limit set of the trajectory $l_{C_0}$ consists only of points with coordinates $x=y=0$; that is, a subset of the critical line composed by the points $Q_{\gamma}$, with $\gamma>0$, together with $Q_1$ and $Q_4$. We prove next that such a configuration is impossible: indeed, Lemmas \ref{lem.Q4}, \ref{lem.Q4.bis} and \ref{lem.Q1Qg} show that no trajectory coming from the finite part of the phase space can approach any $Q_{\gamma}$ with $\gamma>0$ or $Q_4$, while $Q_1$ behaves like an attractor and in this case we would have $C_0\in\mathcal{C}$. However, there is still the theoretical possibility that the $\omega$-limit set is composed by a continuum of points $Q_{\gamma}$ with $\gamma>0$. We deduce from \cite[Lemma 1, Section 2.4]{Carr} that, in a neighborhood of every $Q_{\gamma}$, a trajectory lies exponentially close to its center manifold. However, analyzing $Q_{\gamma}=(0,0,\kappa)$, where $\kappa=\sqrt{1-\gamma^2}/\gamma\in(0,\infty)$, in the system \eqref{PSinf1}, we find (by a change of variable $z=\kappa+\overline{z}$ in the second equation of it) that the center manifold of $Q_{\gamma}$ writes
$$
y=-\frac{\sigma+2}{p-m}(1+\kappa)x+o(x).
$$
Since $\kappa>0$ and $x>0$, it follows that $y<-(\sigma+2)x/(p-m)$ in a sufficiently small neighborhood of $Q_{\gamma}$ and thus $Y<-(\sigma+2)/(p-m)$, contradicting \eqref{interm12} and thus completing the argument.

We have thus deduced that $Z_{\infty}<\infty$ on the trajectory $l_{C_0}$. In this case, since $X_{\infty}=\infty$, the $\omega$-limit of $l_{C_0}$ is composed by points with $x=z=0$ in the $(x,y,z)$-variables defined by \eqref{change2}. We also deduce that $y$ remains bounded, that is, there is $y_0\in(0,\infty)$ such that $-y_0<y(\eta_1)<0$, for any $\eta_1\in\real$. Indeed, on the contrary, there is a subsequence on which $Y/X\to-\infty$, but in such case the trajectory $l_{C_0}$ ends at $Q_3$, since $Q_3$ is a stable node. Assume next that $y(\eta_1)$ oscillates infinitely many times and consider infinite sequences of local minima and maxima $(\eta_{1,k})_{k\geq1}$, respectively $(\eta_{1}^k)_{k\geq1}$, of the function $y(\eta)$ along the trajectory $l_{C_0}$ such that $\eta_{1,k}\to\infty$, $\eta_{1}^{k}\to\infty$ as $k\to\infty$. By evaluating the second equation of the system \eqref{PSinf1} at $\eta_{1,k}$, respectively at $\eta_1^{k}$, and taking into account that $x(\eta_{1,k})$, $z(\eta_{1,k})$ converge to zero as $k\to\infty$ (and similarly for the sequence of maxima), we get that
$$
0=\lim\limits_{k\to\infty}\left[y^2(\eta_{1,k})+\frac{p-m}{\sigma+2}y(\eta_{1,k})\right]
=\lim\limits_{k\to\infty}\left[y^2(\eta_{1}^k)+\frac{p-m}{\sigma+2}y(\eta_{1}^k)\right].
$$
It follows that either at least one of the sequences $(y(\eta_{1,k}))$, $(y(\eta_{1}^k))$ tends to zero, or both of them converge to $-(p-m)/(\sigma+2)$. The former case leads to a contradiction, since $Q_1$ is an attractor and this would imply that $l_{C_0}$ connects to $Q_1$, hence $C_0\in\mathcal{C}$. We are thus left with the latter case, which directly proves that $l_{C_0}$ is a connection between $P_0$ and $Q_5$.

In Figure \ref{fig1} we plot the outcome of a numerical experiment showing the trajectories starting from $P_0$ and reaching one of the critical points $Q_3$, $Q_5$ or $Q_5$, together with their corresponding profiles.

\begin{figure}[ht!]
  % Requires \usepackage{graphicx}
  \begin{center}
  \subfigure[Trajectories starting from $P_0$]{\includegraphics[width=7.5cm,height=6cm]{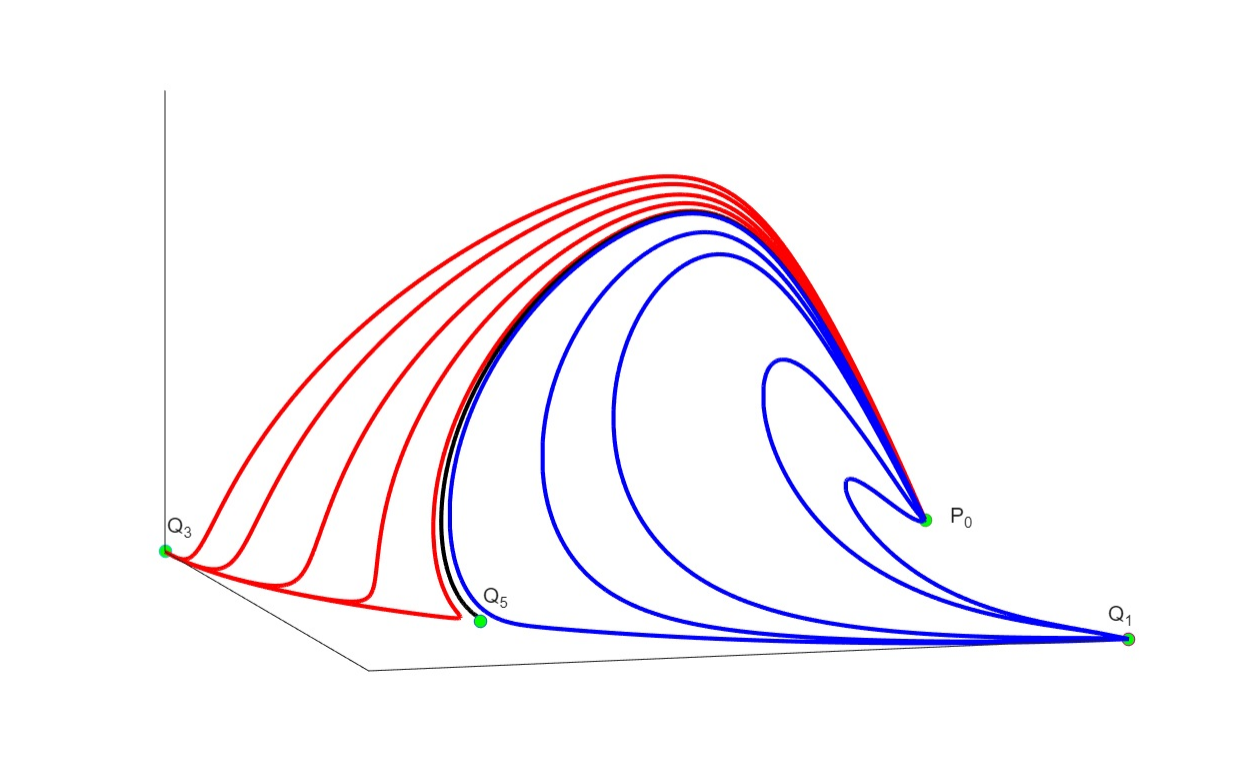}}
  \subfigure[Profiles corresponding to the previous trajectories]{\includegraphics[width=6cm,height=4.5cm]{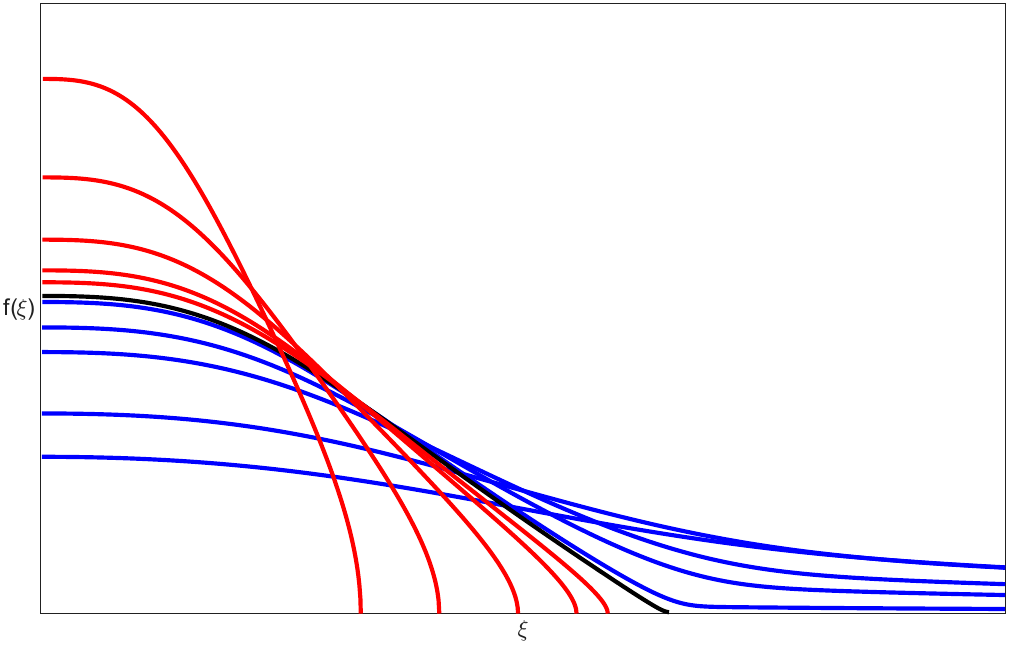}}
  \end{center}
  \caption{A shooting of various trajectories $l_C$ starting from $P_0$ and their corresponding profiles. Experiments for $m=2$, $N=3$, $p=5$ and $\sigma=1$.}\label{fig1}
\end{figure}

\medskip

\textbf{(b) Range $p_S(\sigma)\leq p<\infty$.} We recall first that this range is only possible if $N\geq3$. We work with the cylinder in $(X,Y,Z)$-coordinates whose base is the explicit curve \eqref{cylinder}. The direction of the flow of system \eqref{PSsyst} on the surface of this cylinder, with normal direction
$$
\overline{n}(X,Y,Z)=\left(0,\frac{N+\sigma}{N-2}(2mY+N-2),1\right)
$$
is determined by the sign of
\begin{equation}\label{flow.cyl.ext}
\begin{split}
E(X,Y;p)=\frac{N+\sigma}{N-2}&\left[(p_S(\sigma)-p)Y^2(mY+N-2)\right.\\&\left.-X\left(1+\frac{p-m}{\sigma+2}Y\right)(2mY+N-2)\right],
\end{split}
\end{equation}
which is clearly negative if $p=p_S(\sigma)$, since
$$
E(X,Y;p_S(\sigma))=-(N+\sigma)X\left(1+\frac{2m}{N-2}Y\right)^2<0.
$$
For $p>p_S(\sigma)$, the quantity $E(X,Y;p)$ may become positive only for values in the interval
\begin{equation}\label{interm10}
-\frac{N-2}{m}<Y<-\frac{\sigma+2}{p-m}.
\end{equation}
However, for trajectories located in the interior of the cylinder, we observe that $\dot{Z}(\eta)<0$ whenever $Y(\eta)$ satisfies \eqref{interm10}, while on the boundary of the cylinder the variable $Z$ still increases under the same condition. This prevents any orbit from going out of the interior of the cylinder once it enters it. It is immediate to check that, for every $p\geq p_S(\sigma)$, all trajectories $l_C$ associated to \eqref{lC} enter the interior of the cylinder, with the sole exception of $l_{\infty}$ when $p=p_S(\sigma)$ (which in that case coincides exactly with the base curve \eqref{cylinder} lying in the plane $\{X=0\}$). Consequently, every orbit $(l_C)$ with $C\in(0,\infty)$ remains confined inside the cylinder afterwards. Thus, $Z(\eta)$ and $Y(\eta)$ remain bounded for $\eta\in(-\infty,\eta^+)$ along any of the trajectories $l_C$. Since $X$ increases with $\eta$, by similar arguments as in Part (b) of this proof, we conclude that $X(\eta)\to\infty$ as $\eta\to\eta^+$ and thus, by definition, $l_C$ enters the attractor $Q_1$ for any $C\in(0,\infty)$, as claimed.

We plot in Figure \ref{fig2} the results of an experiment with trajectories starting from $P_0$ and connecting to $Q_1$ according to the previous proof.

\begin{figure}[ht!]
  % Requires \usepackage{graphicx}
  \begin{center}
  \subfigure[Trajectories starting from $P_0$]{\includegraphics[width=7.5cm,height=6cm]{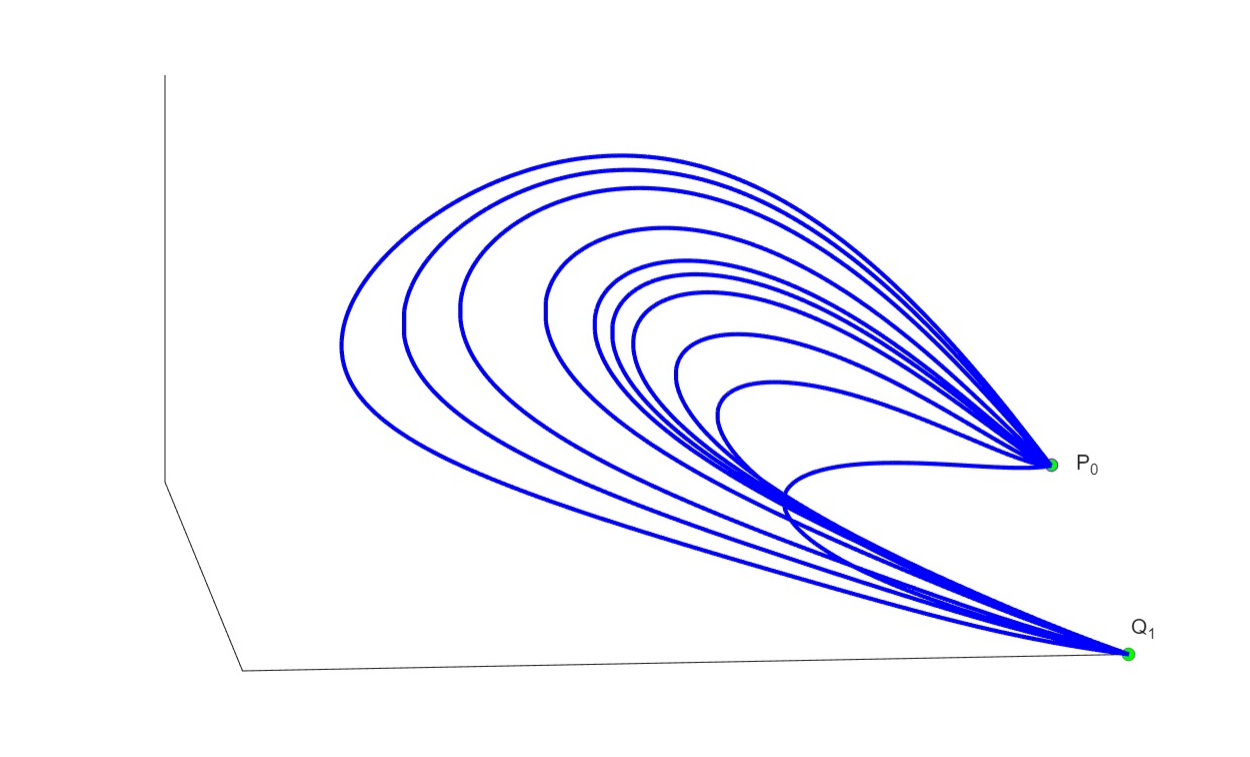}}
  \subfigure[Profiles corresponding to the previous trajectories]{\includegraphics[width=6cm,height=4.5cm]{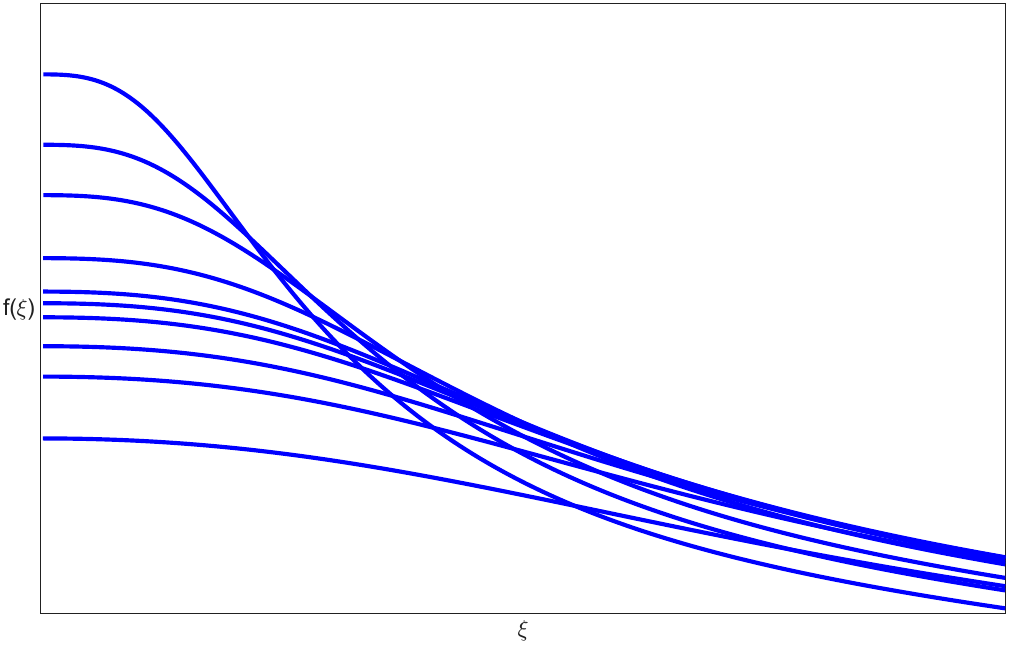}}
  \end{center}
  \caption{A shooting of various trajectories $l_C$ starting from $P_0$ and their corresponding profiles. Experiments for $m=2$, $N=3$, $p=15$ and $\sigma=1$.}\label{fig2}
\end{figure}

\medskip

The conclusion of both Theorems \ref{th.global.heat} and \ref{th.global.super} follows then from \eqref{bij} and the fact that, in the range $p_F(\sigma)<p<p_S(\sigma)$, the connections between $P_0$ and $Q_1$ correspond to orbits $l_C$ with $C\in(0,C_*)$, hence to profiles $f(\cdot;A)$ with $A\in(0,A_*)$, completing the proof.
\end{proof}

\bigskip

\noindent \textbf{Acknowledgements} The authors are partially supported by the Project PID2024-160967NB-I00 funded by AEI (Spain) and FEDER.

\bigskip

\noindent \textbf{Data availability} Our manuscript has no associated data.

\bigskip

\noindent \textbf{Conflict of interest} The authors declare that there is no conflict of interest.

\bibliographystyle{amsplain}

\end{document}